\documentclass[10pt, a4paper]{amsart}
\usepackage{amsmath,amssymb,amsthm,enumerate,indentfirst}

\usepackage[top=30truemm,bottom=30truemm,left=30truemm,right=30truemm]{geometry}

\newtheorem{theorem}{Theorem}[section]
\newtheorem{definition}[theorem]{Definition}
\newtheorem{proposition}[theorem]{Proposition}
\newtheorem{lemma}[theorem]{Lemma}
\newtheorem{conjecture}[theorem]{Conjecture}

\theoremstyle{remark}
\newtheorem{remarks}[theorem]{Remarks}

\newcommand{\card}{\,^\#\!}
\newcommand{\Drin}{\mathrm{Drin}}
\newcommand{\End}{\mathrm{End}}
\newcommand{\Gal}{\mathrm{Gal}}
\newcommand{\GL}{\mathrm{GL}}
\newcommand{\sep}{\mathrm{sep}}
\newcommand{\tor}{\mathrm{tor}}

\hyphenation{drinfeld frobenius galois}

\begin{document}

\title{Torsion points of Drinfeld modules over large algebraic extensions of finitely generated function fields}
\author{Takuya Asayama}
\date{}
\maketitle

\begin{abstract}
Geyer and Jarden proved several results for torsion points of elliptic curves defined over the fixed field by finitely many elements in the absolute Galois group of a finitely generated field over the prime field in its algebraic closure.
As an analogue of these results, this paper studies torsion points of Drinfeld modules defined over the fixed field by finitely many elements in the absolute Galois group of a finitely generated function field in its algebraic closure.
We prove some results which are similar to those of Geyer and Jarden.
\end{abstract}

\pagestyle{myheadings}
\markboth{TAKUYA ASAYAMA}{TORSION POINTS OF DRINFELD MODULES}

\renewcommand{\thefootnote}{\fnsymbol{footnote}}
\footnote[0]{2010 Mathematics Subject Classification:\ Primary 11G09;\ Secondary 12E30.}
\renewcommand{\thefootnote}{\arabic{footnote}}
\renewcommand{\thefootnote}{\fnsymbol{footnote}}
\footnote[0]{Keywords:\ Drinfeld modules;\ torsion points.}
\renewcommand{\thefootnote}{\arabic{footnote}}

\section{Introduction}

Geyer and Jarden \cite{GJ1} considered torsion points of elliptic curves defined over some class of infinitely generated fields over their prime fields and proved several results.
They also conjectured that the same results hold even when considering arbitrary abelian varieties rather than elliptic curves.  
Our goal of this paper is to prove some results for Drinfeld modules analogous to the conjecture of Geyer-Jarden.

Let $K$ be a field.
We fix an algebraic closure $\tilde{K}$ of $K$, and denote by $K^\sep$ the separable closure of $K$ in $\tilde{K}$.
Let $e$ be a positive integer.
For every $e$-tuple $\sigma = (\sigma_1 , \sigma_2 , \dots , \sigma_e)\in \Gal(K^\sep/K)^e$, write $\tilde{K}(\sigma)$ (resp. $K^\sep(\sigma)$) for the fixed field in $\tilde{K}$ (resp. $K^\sep$) by $\sigma$. (We extend each $\sigma_i \in \Gal(K^\sep/K)$ to $\tilde{K}$ uniquely.)
We use the term \textit{almost all} in the sense of the Haar measure defined on the profinite group $\Gal(K^\sep/K)^e$.

Frey and Jarden proved the following theorem for the rank of abelian varieties defined over $K^\sep(\sigma)$.

\begin{theorem}[Frey-Jarden{~\cite[Theorem 9.1]{FyJ}}]
Let $K$ be a finitely generated infinite field over its prime field.
Then almost all $\sigma \in \Gal(K^\sep/K)^e$ have the following property\textup{:}
for any abelian variety $A$ of positive dimension defined over $K^\sep(\sigma)$ the group $A(K^\sep(\sigma))$ of $K^\sep(\sigma)$-rational points of $A$ has infinite rank.
\end{theorem}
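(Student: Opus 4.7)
The plan is to establish the theorem in three stages: a descent step reducing to abelian varieties over finitely generated subfields of $K^\sep$, a largeness result for the fixed field, and a rank-growth argument exploiting that largeness. The overall structure exploits that $K^\sep$ is countable (since $K$ is finitely generated), so the relevant configurations form a countable family and the conclusion follows by intersecting countably many full-measure sets.

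First I would carry out the descent. An abelian variety over $K^\sep(\sigma)$ is specified by finitely many polynomials with finitely many coefficients, so it descends to an abelian variety $A_0/L$ for some finitely generated subfield $K\subseteq L\subseteq K^\sep(\sigma)$. Every such $L$ lies in a countable pool of finitely generated subfields of $K^\sep$; over each such $L$ there are again only countably many isomorphism classes of abelian varieties. It therefore suffices to prove: for each pair $(L,A_0)$ the set of $\sigma\in\Gal(K^\sep/K)^e$ with $L\subseteq K^\sep(\sigma)$ and $A_0(K^\sep(\sigma))$ of finite rank has measure zero. By conditioning on $\sigma\in\Gal(K^\sep/L)^e$ (a closed subgroup of positive measure, by restriction) we may assume from the outset that $A$ is defined over the Hilbertian field $L$ itself.

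Next I would invoke Jarden's theorem on fixed fields of Hilbertian bases: since $L$ is finitely generated and infinite, hence Hilbertian, for almost every $\tau\in\Gal(L^\sep/L)^e$ the field $L^\sep(\tau)$ is pseudo-algebraically closed. This provides the largeness property that will feed the rank construction. Together with the measure-preserving identification of $\Gal(K^\sep/L)^e$ with $\Gal(L^\sep/L)^e$, we reduce to proving the following: if $L$ is finitely generated Hilbertian, $M$ is a PAC algebraic extension of $L$ obtained as $L^\sep(\tau)$ for a random $\tau$, and $A/L$ is an abelian variety of positive dimension, then $A(M)$ has infinite rank.

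The last step, producing infinite rank, is the technical heart. The strategy is to construct inductively a sequence of geometrically irreducible subvarieties $V_1,V_2,\dots$ of $A_L^{\,\sep}$ such that an $M$-point $P_n\in V_n(M)$ is automatically $\mathbb{Z}$-linearly independent of $P_1,\dots,P_{n-1}$ modulo $A(M)_{\tor}$. Typical choices are cosets $P+B\subset A$ avoiding the countably many proper abelian subvarieties and torsion translates already accounted for, coupled with a Bertini-type slicing down to a geometrically irreducible curve; PAC-ness of $M$ then provides the points $P_n$. The hardest part of the whole argument lies here: arranging the $V_n$ so that independence is forced rather than merely generic, which ultimately rests on a Néron specialization / height argument together with a Borel–Cantelli bookkeeping to ensure that almost every $\tau$ is compatible with the entire infinite sequence of constraints simultaneously. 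Once that uniformity is secured, intersecting the resulting full-measure sets over the countable family of pairs $(L,A_0)$ yields the theorem.
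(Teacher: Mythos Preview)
The paper does not prove this statement at all: Theorem~1.1 is quoted in the introduction purely as background, with the proof delegated entirely to the cited reference \cite[Theorem~9.1]{FyJ}. There is therefore no proof in the present paper to compare your proposal against.

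As for the proposal itself, the broad architecture you sketch---countable descent to abelian varieties defined over finitely generated subfields, Jarden's theorem that $K^\sep(\sigma)$ is PAC for almost all $\sigma$, and then a construction of independent points exploiting PAC-ness---does match the general shape of the original Frey--Jarden argument. That said, your third step is where all the content lives, and you have essentially deferred it: phrases like ``arranging the $V_n$ so that independence is forced rather than merely generic'' and the appeal to an unspecified ``N\'eron specialization / height argument together with a Borel--Cantelli bookkeeping'' are placeholders, not a proof. In the actual Frey--Jarden paper the independence is obtained by a concrete construction (covering $A$ by a curve and producing points via the approximation theorem for PAC fields, controlling ramification to force independence), not by the vaguer scheme you describe. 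If you intend this as a genuine proof rather than a reading guide, that step needs to be made precise.
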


The work of Geyer and Jarden \cite{GJ1} is on the torsion subgroup of $E(\tilde{K}(\sigma))$, the group of $\tilde{K}(\sigma)$-rational points of an elliptic curve $E$ over $\tilde{K}(\sigma)$. In contrast to the above theorem, some of their results depend on whether $e$ equals to one or not.

\begin{theorem}[Geyer-Jarden{~\cite[Theorem 1.1]{GJ1}}]\label{GJ}
Let $K$ be a finitely generated field over its prime field. Then for almost all $\sigma \in \Gal(K^\sep/K)^e$ and for every elliptic curve $E$ defined over $\tilde{K}(\sigma)$ the following statements hold\textup{:}
\textup{
    \begin{enumerate}[(a)]
    \item \textit{If $e = 1$, then the group $E_\tor(\tilde{K}(\sigma))$ is infinite. Moreover, there exist infinitely many primes $l$ such that $E(\tilde{K}(\sigma))[l] \neq 0$.}
    \item \textit{If $e \ge 2$, then the group $E_\tor(\tilde{K}(\sigma))$ is finite.}
    \item \textit{If $e \ge 1$, then for every prime $l$ the group $E(\tilde{K}(\sigma))[l^\infty] = \bigcup_{i = 1}^\infty E(\tilde{K}(\sigma))[l^i]$ is finite.}
    \end{enumerate}
}
\noindent Here $E(\tilde{K}(\sigma))[n]$ is the group of $n$-torsion points in $E(\tilde{K}(\sigma))$.
\end{theorem}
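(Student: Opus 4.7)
The plan is to reduce each assertion to a Haar-measure estimate on $\Gal(K^\sep/K)^e$ and then apply the two Borel-Cantelli lemmas. Since $K$ is countable, so is the set of $j$-invariants in $\tilde{K}$, and every elliptic curve over $\tilde{K}(\sigma)$ is, up to $\tilde{K}$-isomorphism, one of a countable list of curves defined over finite extensions of $K$; the passage to twists introduces at most a quadratic character, which does not affect the $l$-torsion for odd $l$. It therefore suffices to fix $E$ over a finite extension of $K$, prove the three conclusions for this single $E$ on a set of full measure, and take a countable intersection.

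For fixed $E/K$, let $\rho_l \colon \Gal(K^\sep/K) \to \GL(T_l E) \cong \GL_2(\mathbb{Z}_l)$ be the $l$-adic Galois representation; then $E(\tilde{K}(\sigma))[l^n] \neq 0$ precisely when there exists a primitive $v \in (\mathbb{Z}/l^n)^2$ with $\rho_l(\sigma_i)v = v$ for all $i$. By Serre's open image theorem (together with a Cartan-subgroup analysis in the CM case), $\rho_l$ has open image in $\GL_2(\mathbb{Z}_l)$ and $\prod_l \rho_l$ has open image in the product of the individual images; this transfers Haar-measure computations from $\Gal(K^\sep/K)^e$ to $\prod_l \GL_2(\mathbb{Z}_l)^e$ up to a bounded factor. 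A direct count of the stabilizer of a primitive vector in $\GL_2(\mathbb{Z}/l^n)$, followed by a union bound over primitive $v$, yields
$$\mu\bigl(\{\sigma \in \GL_2(\mathbb{Z}_l)^e : \exists \text{ primitive } v \in (\mathbb{Z}/l^n)^2 \text{ with } \sigma_i v = v \text{ for all } i\}\bigr) \leq \frac{1}{[l^{2n-2}(l^2-1)]^{e-1}}.$$

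For part (a) with $e = 1$, a sharper count of matrices in $\GL_2(\mathbb{F}_l)$ having $1$ as an eigenvalue shows this probability is of order $1/l$ at $n = 1$; since $\sum_l 1/l = \infty$ and the events for distinct $l$ are (quasi-)independent by the joint open image, the divergent Borel-Cantelli lemma produces infinitely many $l$ with $E(\tilde{K}(\sigma))[l] \neq 0$, whence $E_\tor(\tilde{K}(\sigma))$ is infinite. For (b) with $e \geq 2$, the same bound at $n = 1$ is $\lesssim l^{-(2e-2)}$, which is summable, so the convergent Borel-Cantelli lemma leaves only finitely many $l$ with nontrivial $l$-torsion; combined with (c) at each of these finitely many $l$, this gives finiteness of $E_\tor(\tilde{K}(\sigma))$. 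For (c) with $e \geq 2$, fix $l$ and sum the bound over $n$, which decreases geometrically, so Borel-Cantelli applies; for $e = 1$ in (c), one observes that $E(\tilde{K}(\sigma))[l^\infty]$ is infinite exactly when $\rho_l(\sigma) - 1$ is singular on $T_l E \otimes \mathbb{Q}_l$, a codimension-one analytic condition on $\GL_2(\mathbb{Z}_l)$ and hence a Haar-null set.

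The main obstacle is the open-image / joint-independence step: establishing that $\rho_l$ has open image in $\GL_2(\mathbb{Z}_l)$ and that $\prod_l \rho_l$ has open image in the product, so that the measure estimates may be carried out coordinatewise across $l$ and pairwise independence is available for the divergent Borel-Cantelli lemma. In the non-CM case over a number field this is Serre's theorem, but the uniform statement over arbitrary finitely generated fields (including positive characteristic) and for CM curves requires substantially finer input. A secondary, bookkeeping obstacle is the reduction to a single $E$: the full-measure set of $\sigma$ must simultaneously control every twist of every elliptic curve appearing over $\tilde{K}(\sigma)$, which is possible only because $\tilde{K}$ is countable.
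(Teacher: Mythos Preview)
The paper does not prove this theorem; it is quoted from \cite{GJ1} as motivation for the Drinfeld-module analogue, Theorem~\ref{mainthm}. There is therefore no ``paper's own proof'' to compare against. What the paper \emph{does} prove is Theorem~\ref{mainthm}, and your sketch closely mirrors that argument: the reduction to a fixed object over a finite extension of the base field is Proposition~\ref{rdc1}; the translation into a measure estimate on the image of the Galois representation is Proposition~\ref{rdc2}; the count of matrices in $\GL_r$ with eigenvalue~$1$ is Lemma~5.1; the open-image input (Serre for elliptic curves, Pink--R\"utsche for Drinfeld modules) plays the same role; and Borel--Cantelli concludes. So your outline is the right template, and indeed is essentially the method of \cite{GJ1}.

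Two remarks on the sketch itself. First, the reduction step is simpler than you make it: any elliptic curve over $\tilde K(\sigma)$ has coefficients in $\tilde K$, hence is already defined over some finite extension of $K$, and the set of all such curves is countable. There is no need to pass through $j$-invariants and twists; the twist argument is a detour (and in characteristic~$2,3$ the ``at most quadratic'' claim is false anyway). Second, your statement that $\rho_l$ has open image in $\GL_2(\mathbb Z_l)$ is wrong in the CM case: there the image is open only in the normaliser of a Cartan subgroup, and in positive characteristic one must also treat curves with $j$-invariant in a finite field separately. You flag this as the ``main obstacle,'' which is accurate; \cite{GJ1} handles these cases by direct analysis rather than by a uniform open-image statement, and the estimates $c_1/l \le \mu(\cdot) \le c_2/l$ survive with different constants.
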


The difference between the two cases $e  = 1$ and $e \ge 2$ can be explained to some extent by the fundamental fact that the sum $\sum_l l^{-e}$, where $l$ ranges over all primes, diverges for $e = 1$ and converges for $e \ge 2$.
Note that the measure of the set of $\sigma \in \Gal(K^\sep/K)^e$ such that $E(\tilde{K}(\sigma))[l] \neq 0$ is approximated by $l^{-e}$ up to multiplication by some positive constant unless the $j$-invariant of $E$ is contained in a finite field.

Theorem 1.2 is naturally generalized to the conjecture below as indicated in the same paper.

\begin{conjecture}[Geyer-Jarden{~\cite[Conjecture]{GJ1}}]
Theorem \ref{GJ} is still true even if one replaces $E$ by an arbitrary abelian variety $A$ of positive dimension.
\end{conjecture}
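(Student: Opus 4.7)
The plan is to adapt the Borel--Cantelli strategy used by Geyer--Jarden in the elliptic curve case. For each rational prime $l$ one studies the Haar measure
\[
\mu_l := \mu\bigl(\{\sigma \in \Gal(K^\sep/K)^e : A(\tilde{K}(\sigma))[l] \neq 0\}\bigr),
\]
and provided $\mu_l$ is of order $l^{-e}$ and the events associated to distinct primes $l$ are sufficiently independent, the dichotomy $\sum_l l^{-e} = \infty$ for $e = 1$ versus $\sum_l l^{-e} < \infty$ for $e \ge 2$ produces (a) and (b) through the two halves of the Borel--Cantelli lemma. Part (c) is handled analogously by summing measures associated to $A[l^i]$ with $l$ fixed and $i \to \infty$.

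To bound $\mu_l$, I would first spread $A$ out over a finitely generated subfield of $\tilde{K}(\sigma)$, so that the mod $l$ representation $\rho_l \colon \Gal(K^\sep/K) \to \GL(A[l]) \cong \GL_{2g}(\mathbb{F}_l)$ is defined with $K$ as base. A nonzero common fixed point of $\sigma = (\sigma_1, \dots, \sigma_e)$ in $A[l]$ exists precisely when the intersection of the $1$-eigenspaces of $\rho_l(\sigma_1), \dots, \rho_l(\sigma_e)$ is nontrivial. A direct count shows that the proportion of elements of $\GL_{2g}(\mathbb{F}_l)$ having $1$ as an eigenvalue is of order $l^{-1}$, so Fubini on the product Haar measure yields $\mu_l \asymp l^{-e}$ as soon as the image of $\rho_l$ is large enough in $\GL_{2g}(\mathbb{F}_l)$. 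Abelian varieties whose $l$-adic Galois image is anomalously small --- the analogue of elliptic curves with $j$-invariant in a finite field --- should be excluded or treated separately, presumably by reducing to the isotrivial case via base change and a specialization argument.

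The main obstacle is establishing enough independence of the events across primes $l$ to apply the divergent half of Borel--Cantelli in case (a). For elliptic curves this rests on Serre's open image theorem, which identifies the image of $\Gal(K^\sep/K)$ in $\prod_l \GL_2(\mathbb{F}_l)$ up to a subgroup of finite index. For arbitrary abelian varieties of dimension $g \ge 2$, openness of the image in the Mumford--Tate group is known only in restricted situations (work of Serre, Wintenberger, Pink, Hall, and others), so the hard step is to proceed without a uniformly general open image theorem. One natural route is a Goursat-type argument exploiting that $\GL_{2g}(\mathbb{F}_l)$ and $\GL_{2g}(\mathbb{F}_{l'})$ share no nontrivial common Jordan--H\"older factors for distinct large $l, l'$, which reduces global independence to a uniform largeness statement for each individual $\rho_l$; combined with the measure estimate above, this would yield (a) and (b). Finally, (c) should follow from a direct computation on the Haar measure of $\GL_{2g}(\mathbb{Z}_l)^e$, showing that the set of $\sigma$ for which $\rho_l(\sigma_1), \dots, \rho_l(\sigma_e)$ admit a common $1$-eigenvector in the Tate module $T_l A$ has measure zero.
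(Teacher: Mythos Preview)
The statement you are attempting to prove is labelled a \emph{Conjecture} in the paper, and the paper does not supply a proof of it. Immediately after stating it, the author surveys the literature: Jacobson--Jarden settled Part~(c) in general and Part~(b) in characteristic zero; Geyer--Jarden, Zywina, and Jarden--Petersen dealt with Part~(a) in characteristic zero; and the author explicitly notes that Parts~(a) and~(b) for infinite $K$ of positive characteristic are still open. So there is no ``paper's own proof'' to compare against---the purpose of the paper is to establish a Drinfeld-module analogue (Theorem~\ref{mainthm}), not the abelian-variety conjecture itself.

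Your outline correctly identifies the Borel--Cantelli architecture and the heuristic $\mu_l \asymp l^{-e}$, and this is exactly the template the paper follows in the Drinfeld setting, where the Pink--R\"utsche adelic openness theorem replaces Serre's open image theorem. But your sketch also accurately diagnoses why the abelian-variety case resists a complete proof: the independence input needed for the divergent Borel--Cantelli in Part~(a) is not available in general, because no open-image theorem of the required strength is known for arbitrary abelian varieties over finitely generated fields of positive characteristic. Your Goursat-type workaround still presupposes a uniform largeness statement for the individual $\rho_l$, which is precisely the missing ingredient. The hedged language in your proposal (``presumably'', ``should follow'', ``one natural route'') reflects genuine gaps rather than routine omissions; what you have written is a reasonable research plan, not a proof, and indeed no proof is currently known in the generality stated.
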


Jacobson and Jarden proved in \cite{JJ1} that the conjecture is true if $K$ is a finite field.
In the same paper, there is a proof of Part (a) of the conjecture for $K$ with positive characteristic, but the proof contains an error as indicated in \cite{JJ2}.
They showed in \cite{JJ3} that Part (c) always holds and that Part (b) holds if $K$ has characteristic zero.
The case of Part (a) for characteristic zero appears in a paper of Geyer and Jarden \cite{GJ2} and another one of Zywina \cite{Zy}.
Recently, Jarden and Petersen \cite{JP} completed the proof of the conjecture for this case.
Parts (a) and (b) for $K$ which is infinite and has positive characteristic are still open.

We consider an analogue of Geyer-Jarden's conjecture for Drinfeld modules.
Denote by $\mathbb{F}_q$ the finite field with order $q$. Let $K$ be an algebraic function field in one variable over $\mathbb{F}_q$.
Fix a place $\infty$ of $K$, and let $A$ be the ring consisting of the elements of $K$ which are regular outside $\infty$.
Let $L$ be a field containing $\mathbb{F}_q$ and fix an injective ring homomorphism $\iota: A \to L$ (i.e., we consider an $A$-field with generic characteristic).
Our main result is the following:

\begin{theorem}\label{mainthm}
Suppose that $L$ is finitely generated over $K$.
Then for almost all $\sigma \in \Gal(L^\sep/L)^e$ and for every Drinfeld $A$-module $\varphi$ defined over $\tilde{L}(\sigma)$ with $\End_{\tilde{L}} \varphi = A$ the following statements hold\textup{:}
\textup{
    \begin{enumerate}[(a)]
    \item \textit{Assume $e = 1$. The group $_\varphi\tilde{L}(\sigma)_\tor$ is infinite. Moreover, there exist infinitely many non-zero prime ideals $P$ of $A$ such that $_\varphi\tilde{L}(\sigma)[P] \neq 0$.}
    \item \textit{Assume $e \ge 2$. The group $_\varphi\tilde{L}(\sigma)_\tor$ is finite.}
    \item \textit{The group $_\varphi\tilde{L}(\sigma)[P^\infty] = \bigcup_{n = 1}^\infty {}_\varphi\tilde{L}(\sigma)[P^n]$ is finite for every non-zero prime ideal $P$ of $A$.}
    \end{enumerate}
}
\end{theorem}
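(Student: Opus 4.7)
The plan is to follow the measure-theoretic framework of Geyer-Jarden, adapting it to Drinfeld modules via Galois representations on $P$-torsion. A preliminary reduction uses countability: since $L$ is finitely generated over $\mathbb{F}_q$ it is countable, and so is the family of pairs $(M,\varphi)$ with $M/L$ a finite extension inside $L^\sep$ and $\varphi$ a Drinfeld $A$-module over $M$ with $\End_{\tilde{L}}\varphi = A$. Every Drinfeld module over $\tilde{L}(\sigma)$ descends to such an $M$, and because a countable intersection of full-measure sets has full measure, it suffices to prove each statement for a single fixed pair $(M,\varphi)$.

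The central input is a large-image theorem (Pink--R\"utsche, building on Gardeyn): under the hypothesis $\End_{\tilde{L}}\varphi = A$, for all but finitely many non-zero prime ideals $P$ of $A$ the image of the mod-$P$ representation
\[
\rho_P : \Gal(M^\sep/M) \longrightarrow \GL_r(A/P)
\]
(where $r$ is the rank of $\varphi$) contains $\mathrm{SL}_r(A/P)$, and the $P$-adic representation $\rho_{P^\infty} : \Gal(M^\sep/M) \to \GL_r(A_P)$ has open image. A group-theoretic count in $\GL_r(A/P)$ then yields, for almost all $P$, two-sided bounds
\[
\mu_P := \mu\bigl(\{\sigma \in \Gal(L^\sep/L)^e : {}_\varphi\tilde{L}(\sigma)[P] \neq 0\}\bigr) \asymp |A/P|^{-e}.
\]

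Parts (b) and (c) follow by standard arguments from these inputs. For (b), when $e \geq 2$ the series $\sum_P |A/P|^{-e}$ converges (a Dedekind-type zeta series for $A$ at $s \geq 2$), so the first Borel--Cantelli lemma implies that almost every $\sigma$ has ${}_\varphi\tilde{L}(\sigma)[P] = 0$ for all but finitely many $P$; combined with (c) this yields finiteness of ${}_\varphi\tilde{L}(\sigma)_\tor$. For (c), openness of the image of $\rho_{P^\infty}$ implies that the set of $\sigma$ possessing a non-zero fixed vector in the Tate module $T_P\varphi$ has Haar measure zero, since the locus where $1$ is an eigenvalue cuts out a proper analytic subset of $\GL_r(A_P)^e$; hence ${}_\varphi\tilde{L}(\sigma)[P^\infty]$ is finite almost surely.

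Part (a) is the delicate case, and I expect it to be the main obstacle. For $e = 1$ the series $\sum_P |A/P|^{-1}$ diverges, so one must invoke the second Borel--Cantelli lemma, which demands near-independence of the events $\{\sigma : {}_\varphi\tilde{L}(\sigma)[P] \neq 0\}$ as $P$ varies. Establishing this independence amounts to showing that the extensions $M(\varphi[P])$ for distinct $P$ are, up to a controllable finite obstruction, linearly disjoint over $M$; this is the Drinfeld analogue of the step that remains open for abelian varieties in positive characteristic. Its resolution should rest on Pink--R\"utsche's adelic strong-independence results for Drinfeld modules with $\End_{\tilde{L}}\varphi = A$, together with a careful treatment of the finitely many exceptional primes and of residual correlations coming from determinant characters.
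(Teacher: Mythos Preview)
Your overall architecture matches the paper's: a countability reduction to fixed $(M,\varphi)$, then Borel--Cantelli driven by large-image theorems of Pink and Pink--R\"utsche, with the matrix count $\card S(\GL_r(\mathbb{F}_q),\mathbb{F}_q^r)/\card\GL_r(\mathbb{F}_q)\asymp q^{-1}$ as the key numerical input. Parts (b) and (c) proceed essentially as you outline; for (c) the paper does not invoke analytic subsets but instead shows $S(\GL(r,A_P),A_P^r)$ is null by a translation trick (multiply by powers of a unit $u$ of infinite order and use that at most $r$ eigenvalues can occur), though your sketch would also work.

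The genuine gap is your treatment of Part (a). You flag the independence of the events $\{\sigma:{}_\varphi\tilde L(\sigma)[P]\neq 0\}$ as ``the main obstacle'' and leave it to a hoped-for ``adelic strong-independence'' argument with residual correlations to be handled. In fact the paper proves exact $\mu$-independence cleanly, and the reason is that you have understated the Pink--R\"utsche theorem. Their adelic openness result does not merely say that the image of $\rho_P$ contains $\mathrm{SL}_r(A/P)$ for almost all $P$; it gives an ideal $C$ of $A$ such that for \emph{every} ideal $I$ prime to $C$ (not just prime ideals) the map $\rho_I:G_I\to\GL(r,A/I)$ is \emph{surjective}. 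Taking $\Lambda$ to be the set of primes not dividing $C$, for distinct $P_1,\dots,P_t\in\Lambda$ and $I=\prod P_j$ one then has
\[
[L_I:L]=\card\GL(r,A/I)=\prod_j\card\GL(r,A/P_j)=\prod_j[L_{P_j}:L],
\]
so the fields $L_{P_j}$ are linearly disjoint over $L$ and the events are genuinely $\mu$-independent. No determinant corrections, no exceptional correlations, no analogue of the open abelian-variety problem: the adelic statement, applied to composite moduli, does all the work. Once you use Pink--R\"utsche in this full form, Part (a) follows immediately from the second Borel--Cantelli lemma together with the lower bound $\card S(G_P,{}_\varphi\tilde L[P])/\card G_P\ge c_1/N(P)$ and the divergence of $\sum_P N(P)^{-1}$.
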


\begin{remarks}
1. It turns out that the difference between the two cases $e  = 1$ and $e \ge 2$ arises from the fact that the sum $\sum_P N(P)^{-e}$, where $P$ runs over all prime ideals of $A$ and $N(P) = \card(A/P)$, diverges for $e = 1$ and converges for $e \ge 2$.

\noindent 2. This analogue of Geyer-Jarden's conjecture for Drinfeld modules can also be considered in the case where $\iota$ is not injective (such a field is said to be an $A$-field having \textit{finite characteristic} or \textit{special characteristic}).
The problem in this case remains open.
\end{remarks}

The rest of this paper is organized as follows.
In Section 2, we recall the Haar measure of a profinite group.
Section 3 is devoted to review of the definition of Drinfeld modules and their known properties.
In Section 4, we prove that in order to prove Theorem \ref{mainthm} it suffices to show some statements on \textit{each} Drinfeld module over $L$.
Finally, in Section 5, we derive these statements and complete the proof of the main theorem.

\section{The Haar measure of a profinite group}

In this section we recall the definition of the Haar measure of a profinite group and its fundamental properties used in the later sections, especially in the case where the profinite group arises from a Galois extension.

Let $G$ be a profinite group and $\mathcal{B}$ be its Borel algebra.
A function $\mu: \mathcal{B} \to \mathbb{R}$ is a (\textit{normalized}) \textit{Haar measure} if $\mu$ is a probability measure and satisfies the following extra conditions:

\begin{enumerate}[(1)]
\item (\textit{translation invariance}) If $B \in \mathcal{B}$ and $g \in G$, then $\mu(g B) = \mu(B g) = \mu(B)$.
\item (\textit{regularity}) For $B \in \mathcal{B}$ and $\varepsilon > 0$ there exist an open set $U$ and a closed set $C$ in $G$ such that $C \subseteq B \subseteq U$ and $\mu(U \setminus C) < \varepsilon$.
\end{enumerate}

It is known that a Haar measure is uniquely defined on every profinite group $G$ (see \cite[Proposition 18.2.1]{FrJ}).
We write this measure by  $\mu$, or by $\mu_G$ if a reference to $G$ is needed.
We use the same notation for its completion.

A family $\{ A_i \}_{i \in I}$ of measurable sets of $G$ is said to be \textit{$\mu$-independent} if $\mu(\bigcap_{i \in J} A_i) = \prod_{i \in J} \mu(A_i)$ for every finite subset $J \subseteq I$.

If $G$ and $H$ are profinite groups, then the direct product $G \times H$ is also a profinite group.
Thus we can equip $G \times H$ with the Haar measure $\mu_{G \times H}$.
On the other hand, we can also consider the \textit{product measure} $\mu_G \times \mu_H$ of $G \times H$, which satisfies $(\mu_G \times \mu_H)(A \times B) = \mu_G(A) \mu_H(B)$ for all measurable subsets $A \subseteq G$ and $B \subseteq H$.
It can be proved that $\mu_{G \times H}$ and $\mu_G \times \mu_H$ coincide (after completion) \cite[Proposition 18.4.2]{FrJ}.
Apply this result to $G^e$, the product of $e$ copies of $G$.
We simply denote by $\mu$ or $\mu_G$ again the Haar measure $\mu_{G^e} = \mu_G \times \mu_G \times \dotsm \times \mu_G$ ($e$ times) of $G^e$.

\begin{lemma}[Borel-Cantelli's lemma, see {\cite[Lemma 18.3.5]{FrJ}}]
Let $\{ A_i \}_{i=1}^\infty$ be a countable collection of measurable subsets of a profinite group $G$. Define
\[ A = \bigcap_{n=1}^\infty \bigcup_{i=n}^\infty A_i = \{ g \in G \mid g \in A_i \; \text{for infinitely many} \; i \! \text{'s} \}. \]
Then we have the following\textup{:}
\textup{
\begin{enumerate}[(1)]
    \item \textit{If $\sum_{i=1}^\infty \mu(A_i) < \infty$, then $\mu(A) = 0$.}
    \item \textit{If $\{ A_i \}_{i=1}^\infty$ is $\mu$-independent and $\sum_{i=1}^\infty \mu(A_i) = \infty$, then $\mu(A) = 1$.}
\end{enumerate}
}
\end{lemma}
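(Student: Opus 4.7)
The plan is to prove this as the standard Borel--Cantelli lemma, leveraging only the fact that $\mu$ is a probability measure and that in (2) the hypothesis of $\mu$-independence carries over to the complements.

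For part (1), the first move is the countable subadditivity estimate: since $A \subseteq \bigcup_{i \geq n} A_i$ for every $n$, I would write
\[ \mu(A) \leq \mu\Bigl(\bigcup_{i=n}^\infty A_i\Bigr) \leq \sum_{i=n}^\infty \mu(A_i). \]
Because $\sum_{i=1}^\infty \mu(A_i) < \infty$, the tail on the right tends to $0$ as $n \to \infty$, forcing $\mu(A) = 0$. This part is essentially mechanical.

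For part (2), I would instead bound the complement $A^c = \bigcup_{n=1}^\infty \bigcap_{i=n}^\infty A_i^c$. It suffices, by countable subadditivity, to show that $\mu(\bigcap_{i=n}^\infty A_i^c) = 0$ for every $n$. The key input is that $\mu$-independence of $\{A_i\}$ implies $\mu$-independence of $\{A_i^c\}$ (a finite inclusion-exclusion computation, which I would verify once). Granted this, for any $N \geq n$ I would compute
\[ \mu\Bigl(\bigcap_{i=n}^N A_i^c\Bigr) = \prod_{i=n}^N \bigl(1 - \mu(A_i)\bigr) \leq \prod_{i=n}^N \exp\bigl(-\mu(A_i)\bigr) = \exp\Bigl(-\sum_{i=n}^N \mu(A_i)\Bigr), \]
using the elementary inequality $1 - x \leq e^{-x}$. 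Because $\sum_{i=1}^\infty \mu(A_i) = \infty$, the exponent diverges to $-\infty$ and the right-hand side goes to $0$. By continuity of $\mu$ from above on the decreasing sequence $\bigcap_{i=n}^N A_i^c$ (this is where the probability-measure property is crucial), we conclude $\mu(\bigcap_{i=n}^\infty A_i^c) = 0$, and hence $\mu(A) = 1$.

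The main (and really the only) obstacle is the auxiliary fact that $\mu$-independence of a family of sets passes to their complements; this is a standard inclusion-exclusion exercise, and once it is in hand both parts reduce to the two elementary estimates above. Since the lemma is cited as \cite[Lemma 18.3.5]{FrJ}, I would simply reproduce this classical argument rather than search for any feature specific to the profinite setting.
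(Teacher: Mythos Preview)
Your argument is correct and is exactly the classical proof of Borel--Cantelli. Note that the paper itself does not supply a proof of this lemma at all: it is stated with a citation to \cite[Lemma 18.3.5]{FrJ} and used as a black box, so there is nothing to compare against beyond observing that you have faithfully reproduced the standard argument from the cited reference.
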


Let $K$ be a field and $M$ be a Galois extension of $K$.
We equip the Galois group $\Gal(M/K)$ with a structure of profinite group arising from the \textit{Krull topology}.
The Krull topology is induced by the isomorphism $\Gal(M/K) \to \varprojlim_{N \in \mathcal{N}} \Gal(N/K)$ given by $\sigma \mapsto (\sigma|_N)_{N \in \mathcal{N}}$, where $\mathcal{N}$ is the set of all finite Galois extensions of $K$ in $M$.
We give each $\Gal(N/K)$, $N \in \mathcal{N}$ the discrete topology, and each map $\Gal(N'/K) \to \Gal(N/K)$, $N, N' \in \mathcal{N}$, $N \subseteq N'$ is just a restriction.
It is compact, Hausdorff, and the family $\{ \Gal(M/N) \mid N \in \mathcal{N} \}$ is a basis for the open neighborhoods of the identity in $\Gal(M/K)$.

A family $\{ M_i \}_{i \in I}$ of finite Galois extensions of $K$ is said to be \textit{linearly disjoint} over $K$ if $[M_J : K] = \prod_{i \in J} [M_i : K]$ for every finite subset $J \subseteq I$, where $M_J$ is the composition of $M_i$, $i \in J$.
Note that a family $\{ M_i \}_{i \in I}$ of finite Galois extensions of $K$ is linearly disjoint over $K$ if and only if $\{ \Gal(K^\sep/M_i) \}_{i \in I}$ is $\mu$-independent in $\Gal(K^\sep/K)$ (see \cite[Lemma 18.5.1]{FrJ}).


\section{Drinfeld modules}

The aim of this section is to recall the definition of Drinfeld modules.

Let $K$ be an algebraic function field in one variable over $\mathbb{F}_q$. We fix a place $\infty$ of $K$.
Denote by $A$ the ring of functions in $K$ which are regular outside $\infty$.
An \textit{$A$-field} is a field $L$ which is equipped with a homomorphism of $\mathbb{F}_q$-algebras $\iota: A \to L$.
Throughout this paper, we consider only $A$-fields with $\iota$ injective.
Such an $A$-field is said to have \textit{generic characteristic}.
For an $A$-field $L$ with generic characteristic we can identify $A$ with the image of $\iota$.
Thus it allows us to consider $L$ as an extension of $K$.

Let $L\{\tau\}$ be a twisted polynomial ring over $L$ generated by the $q$-th power Frobenius morphism $\tau$, with the relation $\tau a = a^q \tau$ for all $a \in L$.
Let $D: L\{\tau\} \to L$ be the morphism which takes a twisted polynomial to its constant term.

\begin{definition}
A Drinfeld $A$-module over $L$ is a ring homomorphism of $\mathbb{F}_q$-algebras $\varphi: A \to L\{\tau\}$ such that $D \circ \varphi = \iota$ and there is $a \in A$ with $\varphi_a \neq \iota(a)$.
If $A$ and $\iota$ are fixed, then we denote by $\Drin_A L$ the set of all Drinfeld $A$-modules over $L$.
\end{definition}

Let $L'$ be a field extension of $L$.
Each Drinfeld $A$-module $\varphi$ over $L$ defines a structure of an $A$-module in $L'$ by
\[ a u = \varphi_a (u), \quad a \in A, \, u \in L'. \]
Here $f(x) = \sum_i a_i x^{q^i}$ for $f = \sum_i a_i \tau^i \in L\{\tau\}$ and $x \in L'$. 
We use the notation $_\varphi L'$ when we regard $L'$ as an $A$-module induced by $\varphi$ in the above way.

For any $a \in A$, $a \neq 0$, define the \textit{$a$-torsion submodule} $_\varphi L'[a]$ of $_\varphi L'$ by
\[ _\varphi L'[a] = \{ u \in L' \mid \varphi_a(u) = 0 \}. \]
Let $I$ be a non-zero ideal of $A$. Then the \textit{$I$-torsion submodule} $_\varphi L'[I]$ of $_\varphi L'$ is defined by $_\varphi L'[I] = \bigcap_{0 \neq a \in I} {}_\varphi L'[a]$.
We immediately find that if $I = (a)$ is a principal ideal, then $_\varphi L'[I] = {}_\varphi L'[a]$.
The \textit{$I$-power torsion submodule} $_\varphi L'[I^\infty]$ is said to be $_\varphi L'[I^\infty] = \bigcup_{n=1}^\infty {}_\varphi L'[I^n]$.
Finally, the \textit{torsion submodule} $_\varphi L'_\tor$ is defined by $_\varphi L'_\tor = \bigcup_{0 \neq a \in A} {}_\varphi L'[a] = \bigcup_{0 \neq I \subseteq A} {}_\varphi L'[I]$.
That is, $_\varphi L'_\tor$ consists of all $u \in {}_\varphi L'$ such that there is $a \in A$, $a \neq 0$ with $\varphi_a(u) =0$.

We can show that for every Drinfeld $A$-module $\varphi$ over $L$ there exists a positive integer $r$ satisfying $\deg_\tau \varphi_a = r \deg a$ for each $a \in A$.
This integer $r$ is said to be the \textit{rank} of $\varphi$.

\begin{theorem}[see {\cite[Theorem 13.1 and its Corollary]{Ro}}]\label{I-tor}
Let $\varphi$ be a Drinfeld $A$-module over $L$ with rank $r$ and $n$ be a positive integer.
Then for any non-zero prime ideal $P$ of $A$, we have
\[ _\varphi\tilde{L}[P^n] \cong (A/P^n)^r. \]
More generally, for any non-zero ideal $I$ of $A$, we have
\[ _\varphi\tilde{L}[I] \cong (A/I)^r. \]
\end{theorem}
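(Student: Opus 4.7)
My plan is to reduce the statement about a general ideal $I$ to prime powers via the Chinese Remainder Theorem, and then to pin down the $A$-module structure of ${}_\varphi\tilde L[P^n]$ by a cardinality count combined with the structure theorem for modules over the local Artinian principal ideal ring $A/P^n$. The starting observation will be that for any non-zero $a \in A$ the twisted polynomial $\varphi_a$ has $\tau$-degree $r \deg a$, so its evaluation $\varphi_a(x) \in L[x]$ has degree $q^{r \deg a}$, and since $\iota$ is injective $\varphi_a(x)$ has non-vanishing derivative $\iota(a)$ and is therefore separable, giving $|{}_\varphi\tilde L[a]| = q^{r \deg a} = |A/(a)|^r$. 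The surjectivity of $\varphi_a : \tilde L \to \tilde L$ (polynomial equations over the algebraically closed $\tilde L$) will show that ${}_\varphi\tilde L$ is a divisible $A$-module. For a non-zero ideal $I = \prod_P P^{v_P(I)}$, the primary decomposition will give ${}_\varphi\tilde L[I] \cong \bigoplus_{P \mid I}{}_\varphi\tilde L[P^{v_P(I)}]$, so it will suffice to prove the prime-power case ${}_\varphi\tilde L[P^n] \cong (A/P^n)^r$.

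For the prime-power case, I will pick $b \in P \setminus P^2$ and write $(b) = PJ$ with $J$ coprime to $P$. The key claim is that $\varphi_b$ restricts to a surjection ${}_\varphi\tilde L[P^{n+1}] \twoheadrightarrow {}_\varphi\tilde L[P^n]$ with kernel ${}_\varphi\tilde L[P]$. The kernel will equal ${}_\varphi\tilde L[P^{n+1}] \cap {}_\varphi\tilde L[b] = {}_\varphi\tilde L[P^{n+1}] \cap ({}_\varphi\tilde L[P] \oplus {}_\varphi\tilde L[J]) = {}_\varphi\tilde L[P]$, using ${}_\varphi\tilde L[P^{n+1}] \cap {}_\varphi\tilde L[J] = {}_\varphi\tilde L[P^{n+1} + J] = {}_\varphi\tilde L[A] = 0$. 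For surjectivity, given $y \in {}_\varphi\tilde L[P^n]$, divisibility gives some $x' \in \tilde L$ with $\varphi_b(x') = y$; one checks that $x' \in {}_\varphi\tilde L[P^{n+1}J] = {}_\varphi\tilde L[P^{n+1}] \oplus {}_\varphi\tilde L[J]$, and the ${}_\varphi\tilde L[J]$-component of $x'$ may be discarded since $b \in (b) = PJ \subseteq J$. Induction on $n$ will then yield $|{}_\varphi\tilde L[P^n]| = |{}_\varphi\tilde L[P]|^n$. To pin down $|{}_\varphi\tilde L[P]|$ I will use the finiteness of the class group of $A$ (standard for rings of functions regular outside a single place of a function field over $\mathbb{F}_q$): letting $h$ be the class number and $P^h = (\alpha)$, the first paragraph will give $|{}_\varphi\tilde L[P^h]| = |A/(\alpha)|^r = |A/P|^{rh}$, and comparison with $|{}_\varphi\tilde L[P^h]| = |{}_\varphi\tilde L[P]|^h$ will force $|{}_\varphi\tilde L[P]| = |A/P|^r$.

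Finally, the structure theorem for modules over the local Artinian PIR $A/P^n$ will give ${}_\varphi\tilde L[P^n] \cong \bigoplus_{i=1}^s A/P^{k_i}$ with $1 \le k_i \le n$. Since the $P$-torsion of this decomposition is $(A/P)^s$, comparison with $|{}_\varphi\tilde L[P]| = |A/P|^r$ will force $s = r$; combined with the cardinality identity $\sum k_i = rn$ and the bound $k_i \le n$, this will force $k_i = n$ for all $i$, giving ${}_\varphi\tilde L[P^n] \cong (A/P^n)^r$ and completing the proof. The main obstacle will be the surjectivity of $\varphi_b$ in the second paragraph, which requires combining divisibility of ${}_\varphi\tilde L$ with a careful CRT decomposition to ensure that the lift can be chosen inside ${}_\varphi\tilde L[P^{n+1}]$ rather than merely inside the a priori larger module ${}_\varphi\tilde L[P^{n+1}J]$.
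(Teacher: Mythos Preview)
The paper does not supply a proof of this statement; it simply quotes \cite[Theorem~13.1 and its Corollary]{Ro}. Your argument is correct and is essentially the standard one found in Rosen's text: compute $|{}_\varphi\tilde L[a]|$ for principal $(a)$ via separability of $\varphi_a(x)$ in generic characteristic, reduce a general ideal to prime powers by the Chinese Remainder Theorem, obtain the cardinality of ${}_\varphi\tilde L[P^n]$ by the filtration induced by $\varphi_b$ for a uniformizer $b$ at $P$ (combined with the class-number trick to handle non-principal $P$), and finish by invoking the classification of finite modules over the local Artinian principal ideal ring $A/P^n$. Each step you outline is sound, including the point you flag as the ``main obstacle'': the surjectivity of $\varphi_b\colon {}_\varphi\tilde L[P^{n+1}]\to {}_\varphi\tilde L[P^n]$ does follow exactly as you say, since any preimage $x'$ of $y$ under $\varphi_b$ on $\tilde L$ lies in ${}_\varphi\tilde L[bP^n]={}_\varphi\tilde L[P^{n+1}J]={}_\varphi\tilde L[P^{n+1}]\oplus {}_\varphi\tilde L[J]$ and the $J$-component is killed by $b\in J$.
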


The next result is an analogue of the Mordell-Weil theorem. Poonen proved for the case where $L$ is a finite extension over $K$, and Wang extended this theorem to an arbitrary finitely generated extension $L$ over $K$.

\begin{theorem}[Poonen{~\cite[Theorem 1]{Po}}, Wang{~\cite[Theorem 1]{Wa}}]
Let $L$ be a finitely generated extension over $K$ and $\varphi$ be a Drinfeld $A$-module over $L$. Then the group $_\varphi L$ is the direct sum of a finite torsion submodule $_\varphi L_\tor$ and a free $A$-module of rank $\aleph_0$. 
\end{theorem}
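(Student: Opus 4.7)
The plan is to prove two claims independently and combine them: (i) the torsion submodule $_\varphi L_\tor$ is finite, and (ii) the torsion-free quotient $M := {}_\varphi L/{}_\varphi L_\tor$ is a free $A$-module of rank $\aleph_0$. Once (ii) is established, freeness of $M$ forces the short exact sequence $0 \to {}_\varphi L_\tor \to {}_\varphi L \to M \to 0$ to split, giving the desired direct sum decomposition.

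For (i), I would invoke the theory of canonical heights on Drinfeld modules (due originally to Denis). Fix a non-constant $a \in A$ and construct a Weil-style height $h$ on $L$ adapted to a chosen model of $L$ over $\mathbb{F}_q$. Then define
\[ \hat{h}(u) = \lim_{n \to \infty} \frac{h(\varphi_{a^n}(u))}{|a|_\infty^{rn}}, \]
where $r$ is the rank of $\varphi$ and $|a|_\infty$ is the normalized absolute value of $a$ at the place $\infty$. Standard telescoping estimates show this limit exists, is independent of the choice of $a$, satisfies $\hat{h}(\varphi_b u) = |b|_\infty^r \hat{h}(u)$ for $b \in A$, and vanishes precisely on torsion points. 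A Northcott-type finiteness property for heights on the finitely generated field $L$ then immediately implies that $_\varphi L_\tor$, being the set of height-zero points, is finite.

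For (ii), since $L$ is finitely generated over $\mathbb{F}_q$, it is countable; hence $_\varphi L$ is countable and $M$ has $A$-rank at most $\aleph_0$. To see the rank is exactly $\aleph_0$, one exhibits infinitely many $A$-independent non-torsion points by exploiting the canonical height: starting from any non-torsion $u_1$ (easy to produce once $L$ contains a point of positive canonical height), one inductively selects further $u_i$ whose canonical heights grow so rapidly that no non-trivial $A$-linear relation among $\{u_1, u_2, \dots\}$ can hold, since such a relation together with the transformation rule for $\hat{h}$ under $\varphi$ would force a bound on $\hat{h}(u_i)$ in terms of $\hat{h}(u_1), \dots, \hat{h}(u_{i-1})$. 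The subtler step is promoting torsion-freeness of $M$ to freeness: one shows every finitely generated $A$-submodule of $M$ is free, by combining the Dedekind property of $A$ with a local analysis at the places of $L$ that controls ideal-class obstructions, and then assembles a basis for $M$ via a countable inductive construction.

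The main obstacle is the freeness of $M$ in (ii). Countable torsion-free modules over a Dedekind domain need not be free in general, so this cannot be deduced by purely abstract considerations; one must exploit specific structural features of Drinfeld modules over finitely generated fields. This is precisely where Poonen's original argument (in the case $L/K$ finite) and Wang's subsequent extension to arbitrary finitely generated $L$ do the real work.
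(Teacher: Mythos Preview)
The paper does not prove this theorem at all: it is quoted as a known result, attributed to Poonen \cite{Po} (for $L/K$ finite) and Wang \cite{Wa} (for general finitely generated $L$), and is included only as background context for the Drinfeld-module setting. There is therefore no proof in the paper to compare your proposal against.

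As for the proposal itself, it is a reasonable outline of how the cited proofs actually proceed, but it is not a self-contained proof. Your treatment of (i) via the Denis canonical height and a Northcott-type finiteness is correct in spirit. In (ii), however, two points deserve caution. First, your argument for infinite rank by choosing $u_i$ with rapidly growing $\hat h$ is not quite right as stated: the canonical height is not additive, and the transformation rule $\hat h(\varphi_b u)=|b|_\infty^r \hat h(u)$ alone does not give the inequality you need to rule out $A$-linear relations; one needs the finer information that $\hat h$ induces a norm-like structure (Poonen's ``tame'' property) or a separate specialization/reduction argument. Second, and more importantly, you explicitly acknowledge that the passage from torsion-freeness to freeness is ``precisely where Poonen's original argument \dots\ and Wang's subsequent extension \dots\ do the real work,'' which means your proposal is really a reduction to the cited papers rather than an independent proof. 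That is an honest assessment, but it also means the proposal does not stand on its own.
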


We recall the notions of a morphism of Drinfeld modules and the ring of endomorphisms of a Drinfeld module.

\begin{definition}
Let $\varphi$, $\psi$ be Drinfeld $A$-modules over $L$, and $L'$ be a field extension of $L$. A morphism from $\varphi$ to $\psi$ over $L'$ is a twisted polynomial $f \in L'\{ \tau \}$ satisfying $f \varphi_a = \psi_a f$ for all $a \in A$.
A morphism from $\varphi$ to itself is called an endomorphism of $\varphi$.
Denote by $\End_{L'} \varphi$ the set of all endomorphisms of $\varphi$ over $L'$.
\end{definition}

Obviously, $\End_{L'} \varphi$ forms a subring of $L' \{ \tau \}$.
For any $a \in A$ the constant polynomial $a = a \tau^0$ is clearly an endomorphism of $\varphi$.
Therefore $\End_{L'} \varphi$ always contains $A$.
However, not all of $\varphi \in \Drin_A L$ have $A$ as their endomorphism rings.
We set $\Drin_A^0 L$ to be the set of $\varphi \in \Drin_A L$ such that $\End_{\tilde{L}} \varphi = A$.

For any non-zero prime ideal $P$ of $A$ the \textit{$P$-adic Tate module} $T_P(\varphi)$ of $\varphi$ is defined by
\[ T_P(\varphi) = \varprojlim {}_\varphi \tilde{L}[P^n]. \]
From Theorem \ref{I-tor}, if $\varphi$ has rank $r$, then $T_P(\varphi)$ is a free $A_P$-module of rank $r$, where $A_P$ is the completion of $A$ at $P$.

\section{Reduction steps}

This section is devoted to interchange the order of quantifiers to restate the main theorem in terms of Drinfeld $A$-modules defined over $L$ rather than the measure of $\Gal(L^\sep/L)^e$.

\begin{proposition}\label{rdc1}
Consider the following statements on an $A$-field $M$.
\textup{
    \begin{enumerate}[(A)]
    \item \textit{Fix a Drinfeld $A$-module $\varphi$ defined over $M$ with $\End_{\tilde{M}} \varphi = A$. Then for almost all $\sigma \in \Gal(M^\sep/M)$ there exist infinitely many non-zero prime ideals $P$ of $A$ such that $_\varphi\tilde{M}(\sigma)[P] \neq 0$.}
    \item \textit{Assume $\varphi$ is as} (A) \textit{and $e \ge 2$. Then for almost all $\sigma \in \Gal(M^\sep/M)^e$ there are only finitely many prime ideals $P$ of $A$ such that $_\varphi\tilde{M}(\sigma)[P] \neq 0$.}
    \item \textit{Assume $\varphi$ is as} (A) \textit{and fix a non-zero prime ideal $P$ of $A$. Then for almost all $\sigma \in \Gal(M^\sep/M)$ the group $_\varphi\tilde{M}(\sigma)[P^\infty]$ is finite.}
    \end{enumerate}
}
\noindent Suppose that \textup{(A)} holds for all finite extensions $M$ over an $A$-field $L$.
Then \textup{(a)} of Theorem \ref{mainthm} holds for $L$.
Similarly, \textup{(B)} implies \textup{(b)}, and \textup{(C)} implies \textup{(c)}, respectively.
\end{proposition}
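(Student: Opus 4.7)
The plan is to interchange the order of the quantifiers ``for almost all $\sigma$'' and ``for every $\varphi$'', using two inputs: that any Drinfeld $A$-module defined over $\tilde{L}(\sigma)$ descends to a finite extension of $L$ contained in $\tilde{L}(\sigma)$, and that there are only countably many candidates for such a pair $(M, \varphi)$.

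First I would observe that a Drinfeld $A$-module $\varphi$ over $\tilde{L}(\sigma)$ is determined by the finitely many coefficients of $\varphi_{a_1}, \dots, \varphi_{a_s}$ for a finite generating set $a_1, \dots, a_s$ of $A$ as an $\mathbb{F}_q$-algebra. Since $\tilde{L}(\sigma) \subseteq \tilde{L}$ is algebraic over $L$, the subfield $M$ of $\tilde{L}(\sigma)$ generated over $L$ by these coefficients is a finite extension of $L$; since $\End_{\tilde{M}} \varphi = \End_{\tilde{L}} \varphi = A$, in fact $\varphi \in \Drin_A^0 M$. Writing $M^\sharp$ for the separable closure of $L$ in $M$, the inclusion $M \subseteq \tilde{L}(\sigma)$ is equivalent to $\sigma \in \Gal(L^\sep/M^\sharp)^e$, because the unique extensions to $\tilde{L}$ of elements of $\Gal(L^\sep/L)$ automatically fix purely inseparable elements. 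Under the canonical identification $\Gal(M^\sep/M) = \Gal(L^\sep/M^\sharp)$, the fixed field $\tilde{M}(\sigma)$ coincides with $\tilde{L}(\sigma)$.

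Next I would cover the ``bad set'' in $\Gal(L^\sep/L)^e$, namely the set of $\sigma$ for which the conclusion of (a) fails for some $\varphi \in \Drin_A^0 \tilde{L}(\sigma)$, by $\bigcup_{M, \varphi} \Omega_{M, \varphi}$, where $(M, \varphi)$ ranges over finite extensions $M/L$ in $\tilde{L}$ together with $\varphi \in \Drin_A^0 M$, and $\Omega_{M, \varphi} \subseteq \Gal(L^\sep/M^\sharp)^e$ denotes the set of $\sigma$ that violate the conclusion with this specific $\varphi$. This union is countable because $L$ is countable (being finitely generated over $\mathbb{F}_q$), hence so are its finite extensions in $\tilde{L}$ and the set of Drinfeld $A$-modules over each. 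Applying (A) to $M$ gives $\mu_{\Gal(M^\sep/M)^e}(\Omega_{M, \varphi}) = 0$, using that the existence of infinitely many primes $P$ with ${}_\varphi\tilde{M}(\sigma)[P] \neq 0$ automatically forces ${}_\varphi\tilde{M}(\sigma)_\tor$ to be infinite (distinct primes give torsion elements with distinct annihilators). Since the Haar measure on the closed subgroup $\Gal(L^\sep/M^\sharp)^e = \Gal(M^\sep/M)^e$ is a positive scalar multiple of the restriction to it of the Haar measure on $\Gal(L^\sep/L)^e$, we also have $\mu_{\Gal(L^\sep/L)^e}(\Omega_{M, \varphi}) = 0$. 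A countable union of null sets is null, yielding (a).

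The implications ``(B) implies (b)'' and ``(C) implies (c)'' proceed by the identical mechanism of countable decomposition plus measure-zero on each piece. The only real bookkeeping obstacle is the handling of inseparable subfields: one must carefully identify $\Gal(M^\sep/M)$ as a subgroup of $\Gal(L^\sep/L)$ via the separable closure $M^\sharp$ of $L$ in $M$, and check that null sets transfer compatibly between the two Haar measures under this identification.
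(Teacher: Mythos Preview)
Your approach is essentially the same as the paper's: descend each $\varphi$ to a finite extension of $L$, cover the bad set by a countable union indexed by these descended data, apply the hypothesis on each piece, and use that null sets transfer along the identification $\Gal(M^\sep/M)\cong\Gal(L^\sep/M^\sharp)$ up to the index factor $[M:L]_s$. The paper organizes the index set slightly differently (it chooses for each $\varphi\in\Drin_A^0\tilde{L}$ a canonical minimal field $L_\varphi$ rather than ranging over all pairs $(M,\varphi)$), but this is cosmetic.

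There is one point you gloss over. Statement (C) is formulated only for a single automorphism ($e=1$), while (c) of Theorem~\ref{mainthm} is asserted for every $e\ge 1$. Your ``identical mechanism'' therefore only delivers (c) in the case $e=1$. The paper supplies the missing step: if $\sigma=(\sigma_1,\dots,\sigma_e)$ is bad for (c), then $\tilde{L}(\sigma)\subseteq\tilde{L}(\sigma_i)$ forces each coordinate $\sigma_i$ to be bad for the $e=1$ statement, so the bad set $S^{(e)}$ is contained in $\bigl(S^{(1)}\bigr)^e$, which has measure zero by Fubini. You should add this reduction rather than asserting that (C) $\Rightarrow$ (c) is handled verbatim by the same argument as (A) $\Rightarrow$ (a).
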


\begin{proof}
Suppose that (A) holds for all finite extensions $M/L$.
Let $S$ be the set of $\sigma \in \Gal(L^\sep/L)$ which does not satisfy (a).
We need to show $\mu_L(S) = 0$.
For $\varphi \in \Drin_A^0\tilde{L}$ there is a finite extension $L'/L$ such that $\varphi$ is already defined over $L'$.
Let $L_\varphi$ be the intersection of all of such $L'$.
Then $\varphi \in \Drin_A^0L_\varphi$.
Consider the compositum of the restriction and the inclusion
\[ \rho: \Gal(L_\varphi^\sep/L_\varphi) \to \Gal(L^\sep/(L_\varphi \cap L^\sep)) \hookrightarrow \Gal(L^\sep/L). \]
Since the restriction map is isomorphic and preserves the measure, for every measurable set $T$ in $\Gal(L_\varphi^\sep/L_\varphi)$ it holds that $\mu_L(\rho(T)) = \mu_{L_\varphi}(T) / [ L_\varphi : L]_s$.
Further, let
\[ S'_\varphi = \{ \sigma' \in \Gal(L_\varphi^\sep/L_\varphi) \mid {}_\varphi\tilde{L}(\sigma')_\tor \; \text{is finite} \} \]
and $S_\varphi = \rho(S'_\varphi)$.

Take $\sigma \in S$.
There exists $\varphi \in \Drin_A^0 \tilde{L}(\sigma)$ such that $_\varphi\tilde{L}(\sigma)_\tor$ is finite.
Let $\sigma' \in \Gal(L_\varphi^\sep/L_\varphi)$ be the unique extension of $\sigma$ to $L_\varphi^\sep$.
Then $\sigma' \in S'_\varphi$, so that we have $\sigma \in S_\varphi$.
Therefore it follows that $S \subseteq \bigcup_\varphi S_\varphi$, where $\varphi$ runs over $\Drin_A^0 \tilde{L}$.
Since $\mu_{L_\varphi}(S'_\varphi) = 0$ by (A), we have $\mu_L(S_\varphi) = 0$ for each $\varphi \in \Drin_A^0 \tilde{L}$.
The set $\Drin_A^0 \tilde{L}$ is denumerable, hence $\mu_L(S) = 0$, as desired.

We can prove (B) $\Rightarrow$ (b) in the same way as above.

(C) $\Rightarrow$ (c) is also similar, but it requires more arguments.
For each $P$, let $S_P$ be the set of $\sigma \in \Gal(L^\sep/L)$ for which there is $\varphi \in \Drin_A^0 \tilde{L}(\sigma)$ with $_\varphi \tilde{L}(\sigma)[P^\infty]$ infinite.
The similar arguments to the above ones show $\mu_L(S_P) = 0$ for each $P$.
Denote by $S^{(e)}$ the set of $\sigma \in \Gal(L^\sep/L)^e$ which does not satisfy (c).
Then $S^{(e)} \subseteq \big(S^{(1)}\big)^e$, since $\tilde{L}(\sigma) \subseteq \tilde{L}(\sigma_i)$ for $1 \le i \le e$.
Taking into account $S^{(1)} = \bigcup_P S_P$, we have $\mu_L\big(S^{(1)}\big) = 0$, hence $\mu_L\big(S^{(e)}\big) = 0$.
This implies (c).
\end{proof}

In order to prove the statements in Proposition \ref{rdc1}, further reduction is convenient.
Let us introduce some notations.
Let $\varphi$ be a Drinfeld $A$-module over $L$.
Define
\[ L_I = L({}_\varphi\tilde{L}[I]), \quad G_I = \Gal(L_I/L) \]
for every non-zero ideal $I$ of $A$.
Similarly, put
\[ L_{I^\infty} = L({}_\varphi\tilde{L}[I^\infty]), \quad G_{I^\infty} = \Gal(L_{I^\infty}/L). \]

Let $G$ be an arbitrary group and $Z$ be an abelian group. Suppose that $G$ operates on $Z$.
If $G$ and $Z$ have topologies, then we also suppose that the operation of $G$ is continuous.
For given $G$, $Z$, and a positive integer $e$, denote by $S_e(G, Z)$ the set of $g = (g_1, g_2, \dots, g_e) \in G^e$ such that there is non-zero $z \in Z$ satisfying $g_i z = z$ for all $i$.
In particular, we write $S(G, Z) = S_1(G, Z)$, that is, $S(G, Z)$ is the set of $g \in G$ such that there is non-zero $z \in Z$ with $g z = z$.
It is clear that $S_e(G, Z) \subseteq S(G, Z)^e$.
We deal with the following situations:
\begin{enumerate}[(1)]
    \item $G = \Gal(L'/L)$ and $Z = {}_\varphi \tilde{L}[I]$. Here $L'$ is a Galois extension of $L$ containing $L_I$, especially $L_I$ itself or $L^\sep$. Each element of $G$ operates on $_\varphi \tilde{L}[I]$ since it is determined by polynomials with coefficients in $L$.
    \item $G = \Gal(L'/L)$ and $Z = {}_\varphi \tilde{L}[I^\infty]$, where $L'$ is a Galois extension of $L$ containing $L_{I^\infty}$, especially $L_{I^\infty}$ itself or $L^\sep$. For the same reason as above, $G$ operates on $_\varphi \tilde{L}[I^\infty]$.
    \item $G = \GL(n, R)$ and $Z = R^n$. Here $n$ is a positive integer, $R$ a commutative ring with $1$ and $\GL(n, R)$ operates on $R^n$ in the usual sense.
Notice that a matrix $g \in \GL(n, R)$ belongs in $S(\GL(n, R), R^n)$ if and only if $1$ is an eigenvalue of $g$.
\end{enumerate}

\begin{proposition}\label{rdc2} 
Consider the following statements on an $A$-field $L$ and on $\varphi \in \Drin_A^0 L$.
\textup{
    \begin{enumerate}[(A')]
    \item \textit{There exists an infinite set $\Lambda$ of non-zero prime ideals of $A$ which satisfies the following}:
        \begin{enumerate}[(\text{A}'1)]
        \item $\sum_{P \in \Lambda} \card S(G_P, {}_\varphi\tilde{L}[P]) / \card G_P = \infty.$
        \item \textit{For every distinct $P_1, P_2, \dots, P_t \in \Lambda$,}
        \[ \frac{\card S_I}{\card G_I} = \prod_{j=1}^t \frac{\card S(G_{P_j}, {}_\varphi\tilde{L}[P_j])}{\card G_{P_j}}, \]
        \textit{where $I = \prod_{j=1}^t P_j$, and}
\[ S_I = \left\{ \sigma \in G_I \;\middle|\; \sigma |_{L_{P_j}} \in S(G_{P_j}, {}_\varphi\tilde{L}[P_j]) \;\; \textit{for} \;\; 1 \le j \le t \right\}. \]
        \end{enumerate}
    \item \textit{There exists a positive constant $c$ such that}
    \[ \frac{\card S(G_P, {}_\varphi\tilde{L}[P])}{\card G_P} \le \frac{c}{N(P)} \]
    \textit{for every non-zero prime ideal $P$ of $A$. Here $N(P) = \card (A/P)$.}
    \item \textit{For every non-zero prime ideal $P$ of $A$ the set $S(G_{P^\infty}, T_P(\varphi))$ is a zero set in $G_{P^\infty}$.}
    \end{enumerate}
}
\noindent Then \textup{(A')} implies \textup{(A)} of Proposition \ref{rdc1} with $M$ replaced by $L$.
Similarly, \textup{(B')} implies \textup{(B)}, and \textup{(C')} implies \textup{(C)}, respectively.
\end{proposition}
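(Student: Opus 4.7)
The plan is to translate each of (A), (B), (C) into a measure-theoretic statement on subsets of $\Gal(L^\sep/L)$ (or its $e$-fold product) and then to invoke the Borel-Cantelli lemma from Section~2. The basic translation is this: since $_\varphi\tilde{L}[P] \subseteq L_P$ and the restriction $\Gal(L^\sep/L) \twoheadrightarrow G_P$ is measure-preserving, the set
\[ A_P = \{\sigma \in \Gal(L^\sep/L) \mid {}_\varphi\tilde{L}(\sigma)[P] \neq 0\} \]
is the preimage of $S(G_P,{}_\varphi\tilde{L}[P])$, so $\mu(A_P) = \card S(G_P,{}_\varphi\tilde{L}[P])/\card G_P$. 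In the $e$-tuple setting, $B_P = \{\sigma \in \Gal(L^\sep/L)^e \mid {}_\varphi\tilde{L}(\sigma)[P] \neq 0\}$ is the preimage of $S_e(G_P,{}_\varphi\tilde{L}[P]) \subseteq G_P^e$.

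For (A')$\Rightarrow$(A), I apply Borel-Cantelli part (2) to $\{A_P\}_{P \in \Lambda}$. Theorem~\ref{I-tor} together with the Chinese remainder theorem yields $_\varphi\tilde{L}[I] = \bigoplus_{j=1}^t {}_\varphi\tilde{L}[P_j]$ for $I = \prod_j P_j$ with distinct $P_j \in \Lambda$; hence $L_I$ is the compositum of the $L_{P_j}$, and $\sigma \in \bigcap_j A_{P_j}$ iff $\sigma|_{L_I} \in S_I$, so $\mu(\bigcap_j A_{P_j}) = \card S_I / \card G_I$. Condition (A'2) is then exactly $\mu$-independence of $\{A_P\}_{P \in \Lambda}$, while (A'1) is divergence of $\sum \mu(A_P)$; Borel-Cantelli (2) produces almost all $\sigma$ lying in infinitely many $A_P$. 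For (B')$\Rightarrow$(B), the inclusion $S_e(G_P, Z) \subseteq S(G_P, Z)^e$ combined with (B') gives $\mu(B_P) \le (c/N(P))^e$; since $\sum_P N(P)^{-e}$ converges for $e \ge 2$, Borel-Cantelli part (1) delivers (B).

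For (C')$\Rightarrow$(C), set $V_P = {}_\varphi\tilde{L}[P^\infty]$ and let $\bar\sigma \in G_{P^\infty}$ denote the image of $\sigma$; the finiteness of $_\varphi\tilde{L}(\sigma)[P^\infty] = V_P^{\bar\sigma}$ depends only on $\bar\sigma$. Viewing $\bar\sigma$ as a matrix in $\GL(r, A_P)$ through its action on $T_P(\varphi) \cong A_P^r$, and using the Pontryagin duality $V_P \cong (K_P/A_P)^r$, one checks that the kernel of $\bar\sigma - 1$ on $V_P$ is infinite iff $\det(\bar\sigma - 1) = 0$, iff $\bar\sigma$ fixes some non-zero vector of $T_P(\varphi)$, i.e.\ $\bar\sigma \in S(G_{P^\infty}, T_P(\varphi))$. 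Hypothesis (C') then says this set is null, which is (C). I expect the main obstacles to be (i) the verification that (A'2) really encodes $\mu$-independence, which via Theorem~\ref{I-tor} amounts to linear disjointness of $\{L_P\}_{P \in \Lambda}$ over $L$, and (ii) the matrix/duality computation reducing the infiniteness of $V_P^{\bar\sigma}$ to the Tate-module condition; the remaining steps are direct applications of Borel-Cantelli and the basic measure theory recalled in Section~2.
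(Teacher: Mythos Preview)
Your argument for (A')$\Rightarrow$(A) and (B')$\Rightarrow$(B) matches the paper's exactly: lift $S(G_P,{}_\varphi\tilde{L}[P])$ to $\Gal(L^\sep/L)$, read off the measures as the given ratios, and apply the two halves of Borel--Cantelli. One small clarification: your ``obstacle (i)'' is not really an obstacle. Once you observe that $L_I$ contains each $L_{P_j}$ (which is what Theorem~\ref{I-tor} and the Chinese remainder theorem give), the identity $\mu\bigl(\bigcap_j A_{P_j}\bigr)=\card S_I/\card G_I$ is immediate, and then (A'2) \emph{is} the statement of $\mu$-independence. Linear disjointness of the $L_{P_j}$ is not needed here; it is what one uses later to \emph{verify} (A'2), not to deduce (A) from it.

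For (C')$\Rightarrow$(C) you take a genuinely different route from the paper. The paper argues by compactness: writing $P^h=(b)$, if ${}_\varphi\tilde{L}(\sigma)[P^\infty]$ is infinite then the finite sets ${}_\varphi\tilde{L}(\sigma)[b^m]\setminus{}_\varphi\tilde{L}(\sigma)[b^{m-1}]$ are all nonempty and form an inverse system under $\varphi_b$, so their inverse limit is nonempty and any point of it is a nonzero $\sigma$-fixed element of $T_P(\varphi)$. Your approach is instead linear-algebraic: identifying the action of $\bar\sigma$ with a matrix $g\in\GL(r,A_P)$, you observe that $V_P^{\bar\sigma}$ is infinite iff $\det(g-1)=0$ iff $g$ fixes a nonzero vector in $A_P^r$. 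This is correct (Cramer's rule over the DVR $A_P$ bounds the kernel on $(K_P/A_P)^r$ when $\det(g-1)\neq 0$, and a nonzero kernel vector in $A_P^r$ produces infinitely many fixed points in $(K_P/A_P)^r$). The paper's compactness argument avoids choosing bases and the ``Pontryagin duality'' identification altogether; your determinant argument is more explicit and makes the equivalence $V_P^{\bar\sigma}$ infinite $\Leftrightarrow$ $T_P(\varphi)^{\bar\sigma}\neq 0$ transparent. Either way the reduction to (C') is sound.
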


\begin{proof}
(A') $\Rightarrow$ (A): The lifting of $S(G_P, {}_\varphi\tilde{L}[P])$ to $\Gal(L^\sep/L)$ with respect to the restriction map $\Gal(L^\sep/L) \to G_P$ coincides with $S(\Gal(L^\sep/L), {}_\varphi\tilde{L}[P])$.
Then we have
\begin{equation}
\mu\left(S(\Gal(L^\sep/L), {}_\varphi\tilde{L}[P])\right) = \frac{\card S(G_P, {}_\varphi\tilde{L}[P])}{\card G_P}
\end{equation}
for $P \in \Lambda$.
From (A'1), it follows that $\sum_{P \in \Lambda} \mu(S(\Gal(L^\sep/L), {}_\varphi\tilde{L}[P])) = \infty$.

Now take distinct $P_1, P_2, \dots, P_t \in \Lambda$.
The lifting of $S_I$ to $\Gal(L^\sep/L)$ is the intersection of $S(\Gal(L^\sep/L), {}_\varphi\tilde{L}[P_j])$, $1 \le j \le t$.
Thus we have
\begin{alignat*}{2}
\mu\left(\bigcap_{j=1}^t S(\Gal(L^\sep/L), {}_\varphi\tilde{L}[P_j])\right) &= \frac{\card S_I}{\card G_I} & \quad & \\
 &= \prod_{j=1}^t \frac{\card S(G_{P_j}, {}_\varphi\tilde{L}[P_j])}{\card G_{P_j}} & & \text{by (A'2)} \\
 &= \prod_{j=1}^t \mu\left(S(\Gal(L^\sep/L), {}_\varphi\tilde{L}[P_j])\right) & & \text{by (1).}
\end{alignat*}
This implies the $\mu$-independence of $\{ S(\Gal(L^\sep/L), {}_\varphi\tilde{L}[P]) \}_{P \in \Lambda}$.
Therefore, by Borel-Cantelli's lemma, the set
\[ S = \{ \sigma \in \Gal(L^\sep/L) \mid \sigma \in S(\Gal(L^\sep/L), {}_\varphi\tilde{L}[P]) \; \text{for infinitely many} \; P\text{'s} \} \]
has measure one in $\Gal(L^\sep/L)$.
For every $\sigma \in S$ there are infinitely many prime ideals $P$ of $A$ such that $_\varphi\tilde{L}(\sigma)[P] \neq 0$.

(B') $\Rightarrow$ (B): From (1) and (B'), we obtain
\[ \mu\left(S(\Gal(L^\sep/L), {}_\varphi\tilde{L}[P])\right) \le \frac{c}{N(P)} \]
for each non-zero prime ideal $P$ of $A$.
Therefore
\[ \mu\left(S_e(\Gal(L^\sep/L), {}_\varphi\tilde{L}[P])\right) \le \mu\left(S(\Gal(L^\sep/L), {}_\varphi\tilde{L}[P])\right)^e \le \left(\frac{c}{N(P)}\right)^e. \]
Since $e \ge 2$, the sum $\sum_P \mu(S_e(\Gal(L^\sep/L), {}_\varphi\tilde{L}[P]))$ converges.
Applying Borel-Cantelli's lemma, for almost all $\sigma \in \Gal(L^\sep/L)^e$ there are only finitely many prime ideals $P$ such that $\sigma \in S_e(\Gal(L^\sep/L), {}_\varphi\tilde{L}[P])$. 
This implies that $_\varphi\tilde{L}(\sigma)_\tor$ is finite.

(C') $\Rightarrow$ (C): It suffices to show that if $\sigma \in \Gal(L^\sep/L)$ and the group $_\varphi\tilde{L}(\sigma)[P^\infty]$ is infinite, then $\sigma|_{L_{P^\infty}} \in S(G_{P^\infty}, T_P(\varphi))$.
Since the class number $h$ of $A$ is finite, 
there is $b \in A$ such that $P^h = (b)$.
Then
\[ T_P(\varphi) = \varprojlim {}_\varphi\tilde{L}[P^n] \cong \varprojlim {}_\varphi\tilde{L}[b^m]. \]
Because each $_\varphi\tilde{L}(\sigma)[b^m]$ is finite but the union $\bigcup_{m=1}^\infty {}_\varphi\tilde{L}(\sigma)[b^m]$ is infinite, there are infinitely many $m$'s such that
\[ _\varphi\tilde{L}(\sigma)[b^m]^\ast = {}_\varphi\tilde{L}(\sigma)[b^m] \setminus {}_\varphi\tilde{L}(\sigma)[b^{m-1}] \]
is not empty.
If $x \in {}_\varphi\tilde{L}(\sigma)[b^m]^\ast$, then $\varphi_b(x) \in {}_\varphi\tilde{L}(\sigma)[b^{m-1}]^\ast$.
Namely, $\varphi_b$ maps $_\varphi\tilde{L}(\sigma)[b^m]^\ast$ into $_\varphi\tilde{L}(\sigma)[b^{m-1}]^\ast$.
In particular, $_\varphi\tilde{L}(\sigma)[b^m]^\ast$ is not empty for all $m$, hence it follows from \cite[Corollary 1.1.4]{FrJ} that $\varprojlim {}_\varphi\tilde{L}(\sigma)[b^m]^\ast$ is also not empty.
Each point of this inverse limit yields a non-zero point in $T_P(\varphi)$ which is fixed by $\sigma$.
Therefore we obtain $\sigma|_{L_{P^\infty}} \in S(G_{P^\infty}, T_P(\varphi))$, as desired.
\end{proof}

\section{Proof of the main theorem}

In this section we show (A')--(C') of Proposition 4.2 and complete the proof of the main theorem.

The proofs of Parts (A') and (B') are carried out in parallel.
By using the theorem of Pink and R\"{u}tsche about the representation on Drinfeld modules, it turns out that Parts (A') and (B') follow by estimating the density of matrices in $\GL(r, \mathbb{F}_q)$ which fix a non-zero element in $\mathbb{F}_q^r$.

\begin{lemma}
Let $r$ be a positive integer.
There exist positive constants $c_1$, $c_2$ such that
\[ c_1 q^{r^2-1} \le \card S(\GL(r, \mathbb{F}_q), \mathbb{F}_q^r) \le c_2 q^{r^2-1} \]
for all prime powers $q$.
Hence we have
\[ \frac{c'_1}{q} \le \frac{\card S(\GL(r, \mathbb{F}_q), \mathbb{F}_q^r)}{\card \GL(r, \mathbb{F}_q)} \le \frac{c'_2}{q} \]
for some constants $c'_1, c'_2 > 0$.
\end{lemma}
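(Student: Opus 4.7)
The plan is to double-count pairs $(g, v) \in \GL(r, \mathbb{F}_q) \times (\mathbb{F}_q^r \setminus \{0\})$ with $gv = v$, stratified by the dimension $d(g) := \dim \ker(g - I)$ of the $1$-eigenspace. Writing $N_d$ for the number of $g \in \GL(r, \mathbb{F}_q)$ with $d(g) = d$, so that $\card S(\GL(r, \mathbb{F}_q), \mathbb{F}_q^r) = \sum_{d \ge 1} N_d$, the transitive action of $\GL(r, \mathbb{F}_q)$ on $\mathbb{F}_q^r \setminus \{0\}$ and the block description of the stabilizer of $e_1$ (matrices of shape $\bigl(\begin{smallmatrix}1 & * \\ 0 & B\end{smallmatrix}\bigr)$ with $B \in \GL(r-1, \mathbb{F}_q)$) give the key identity
\[
\sum_{d \ge 1} N_d (q^d - 1) \;=\; (q^r - 1)\, q^{r-1}\, \card\GL(r-1, \mathbb{F}_q),
\]
whose right-hand side is of order $q^{r^2}$ uniformly in $q$.

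The upper bound is immediate from $q^d - 1 \ge q - 1$ for every $d \ge 1$: the identity yields $(q-1)\card S \le (q^r - 1)\,q^{r-1}\,\card\GL(r-1, \mathbb{F}_q)$, so $\card S \le c_2 q^{r^2-1}$ for a suitable constant.

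The lower bound is the more delicate point, and is where I expect the main obstacle to lie: one must show that the contribution from $d \ge 2$ on the left-hand side of the identity is of lower order than $q^{r^2}$, so that $N_1$ alone must be comparable to $q^{r^2-1}$. To this end I plan an analogous double count with ordered linearly independent $d$-tuples $(v_1, \ldots, v_d)$ fixed by $g$: the joint stabilizer of such a tuple in $\GL(r, \mathbb{F}_q)$ has order $q^{d(r-d)}\card\GL(r-d, \mathbb{F}_q)$, while the number of such tuples inside any $d'$-dimensional fixed space with $d' \ge d$ is bounded below by $\card\GL(d, \mathbb{F}_q)$. Comparing the two counts gives the uniform estimate
\[
N_d \;\le\; C_d\, q^{r^2 - d^2} \qquad (d \ge 1),
\]
with constants $C_d$ depending only on $r$. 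This yields $\sum_{d \ge 2} N_d q^d = O(q^{r^2 - 2})$, negligible against $q^{r^2}$. Rearranging the basic identity then gives $(q-1)N_1 \ge q^{r^2}(1 - o(1))$, hence $\card S \ge N_1 \ge c_1 q^{r^2-1}$.

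The density statement follows on dividing through by $\card\GL(r, \mathbb{F}_q) = q^{r^2}\prod_{i=1}^{r}(1 - q^{-i})$, which lies between two fixed positive multiples of $q^{r^2}$ for all prime powers $q$. Conceptually, the bound $N_d \le C_d q^{r^2-d^2}$ records the fact that the stratum of matrices with $d$-dimensional $1$-eigenspace has codimension $d^2$ in $M_r$; the double count above is what converts this geometric input into the quantitative estimate needed here, and it is precisely the uniformity of the constants in $q$ that makes the lower bound go through.
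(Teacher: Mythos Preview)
Your proposal is correct and follows essentially the same approach as the paper. Both arguments stratify $S(\GL(r,\mathbb{F}_q),\mathbb{F}_q^r)$ by the dimension of the $1$-eigenspace, establish the bound $N_d \le C_d\,q^{r^2-d^2}$ by counting fixed $d$-dimensional data (you use ordered independent $d$-tuples and orbit--stabilizer, the paper uses $d$-dimensional subspaces and explicit pointwise-stabilizer sizes), and obtain the lower bound from the double-count identity $\sum_{d\ge 1} N_d(q^d-1)=(q^r-1)\,q^{r-1}\card\GL(r-1,\mathbb{F}_q)$---which is exactly the paper's relation $\sum_i \card S_i^{(1)}=\sum_j \tfrac{q^j-1}{q-1}\,b_j$ multiplied through by $q-1$. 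Your upper bound via $q^d-1\ge q-1$ is a hair slicker than the paper's use of the $j=1$ stratum bound, but the content is the same.
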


\begin{proof} 
The second assertion follows from the first one and the well-known fact that $\card \GL(r, \mathbb{F}_q) = (q^r -1) (q^r -q) \dotsm (q^r - q^{r-1})$. Thus it is sufficient to prove the existence of $c_1$ and $c_2$.

Set $G = \GL(r, \mathbb{F}_q)$, $V = \mathbb{F}_q^r$.
If $r=1$, then we see $S(G, V) = \{ 1 \}$, so that we take $c_1 = c_2 =1$.

Assume $r \ge 2$. Let $1 \le j \le r$.
There are $(q^r -1) (q^r - q) \dotsm (q^r - q^{j-1})$ linearly independent (ordered) $j$-tuples of $V$, and each of them generates a $j$-dimensional subspace of $V$.
Since there are $(q^j - 1) (q^j -q) \dotsm (q^j - q^{j-1})$ (ordered) bases of every $j$-dimensional subspace, the number of $j$-dimensional subspaces of $V$ is
\[ t_j = \frac{(q^r - 1) (q^r - q) \dotsm (q^r - q^{j-1})}{(q^j - 1) (q^j -q) \dotsm (q^j - q^{j-1})}. \]
List all the subspaces of dimension $j$, say $W_1^{(j)}, W_2^{(j)}, \dots, W_{t_j}^{(j)}$.
Further, let
\[ S_i^{(j)} = \left\{ g \in G \;\middle|\; g a = a \; \text{for all} \; a \in W_i^{(j)} \right\} \]
for $1 \le j \le r$, $1 \le i \le t_j$, and set
\[ B_j = \left\{ g \in G \;\middle|\; \dim_{\mathbb{F}_q} V^g = j \right\}, \quad b_j = \card B_j, \]
where $V^g = \{ a \in V \mid g a = a \}$.

For each $1 \le j \le r$, $1 \le i \le t_j$, we have
\[ \card S_i^{(j)} = (q^r -q^j) (q^r - q^{j+1}) \dotsm (q^r -q^{r-1}). \]
Now we claim
\[ \bigcup_{h=j}^r B_h = \bigcup_{i=1}^{t_j} S_i^{(j)}\]
for $1 \le j \le r$.
Indeed, both hands of the equality coincide with the set of all $g \in G$ such that $\dim_{\mathbb{F}_q} V^g \ge j$.
Thus it follows that
\begin{align*}
b_j &= \card B_j \le \rule{0em}{4ex}^\#\!\! \Biggl( \bigcup_{h=j}^r B_h \Biggr) = \rule{0em}{4ex}^\#\!\! \Biggl( \bigcup_{i=1}^{t_j} S_i^{(j)} \Biggr) \le \sum_{i=1}^{t_j} \card S_i^{(j)} \\
 &= \frac{(q^r - 1) (q^r - q) \dotsm (q^r - q^{j-1})}{(q^j - 1) (q^j -q) \dotsm (q^j - q^{j-1})} (q^r -q^j) (q^r - q^{j+1}) \dotsm (q^r -q^{r-1}) \\
 &\le \alpha_j q^{r^2-j^2}.
\end{align*}
Here $\alpha_j$ is some positive constant depending only on $r$ and $j$, not on $q$.
Since $S(G, V) = \bigcup_{h=1}^r B_h$, substituting $j=1$ shows that we can take $c_2 = \alpha_1$.

For $1 \le j \le r$ and $g \in B_j$ there are $(q^j - 1) / (q - 1)$ subspaces of dimension one in $V^g$.
Thus there are  $(q^j - 1) / (q - 1)$ $i$'s such that $g \in S_i^{(1)}$.
Note that $\{ B_j \}_{j=1}^r$ gives a partition of $S(G, V)$.
Then we find
\begin{align*}
\sum_{i=1}^{t_1} \card S_i^{(1)} &= \sum_{j=1}^r \frac{q^j - 1}{q - 1} b_j \\
 &= \sum_{j=1}^r b_j + \sum_{j=2}^r \left( \frac{q^j - 1}{q - 1} - 1 \right) b_j \\
 &= \card S(G, V) + \sum_{j=2}^r \left( \frac{q^j - 1}{q - 1} - 1 \right) b_j.
\end{align*}
Using $q \ge 2$, we see $(q^j - 1) / (q - 1) - 1 \le 2 q^{j-1}$ for $2 \le j \le r$.
Hence we estimate
\[ \sum_{j=2}^r \left( \frac{q^j - 1}{q - 1} - 1 \right) b_j \le \sum_{j=2}^r 2 q^{j-1} \cdot \alpha_j q^{r^2-j^2} \le \alpha q^{r^2-3} \]
for some constant $\alpha > 0$.
Therefore
\begin{align*}
\card S(G, V) &= \sum_{i=1}^{t_1} \card S_i^{(1)} - \sum_{j=2}^r \left( \frac{q^j - 1}{q - 1} - 1 \right) b_j \\
 &\ge \frac{(q^r - 1) (q^r - q) \dotsm (q^r - q^{r-1})}{q - 1} - \alpha q^{r^2-3},
\end{align*}
which says the existence of $c_1$.
\end{proof}

\begin{proposition}
Let $L$ be a finitely generated extension of $K$.
Then \textup{(A')} and \textup{(B')} of Proposition \ref{rdc2} hold for every $\varphi \in \Drin_A^0 L$.
\end{proposition}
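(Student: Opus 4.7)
The plan is to combine Lemma 5.1 with the theorem of Pink and R\"utsche on the adelic openness of the Galois representation attached to $\varphi$. Since $\varphi \in \Drin_A^0 L$ and $L$ is finitely generated over $K$, Pink-R\"utsche furnishes a finite set $\Sigma$ of non-zero prime ideals of $A$ such that (i) for every $P \notin \Sigma$ the mod-$P$ representation $\Gal(L^\sep/L) \to \GL(r, A/P)$ is surjective, so that under the natural embedding $G_P = \GL(r, A/P) = \GL(r, \mathbb{F}_{N(P)})$ acting on $\mathbb{F}_{N(P)}^r$; and (ii) for any distinct $P_1, \dots, P_t \notin \Sigma$ the fields $L_{P_j}$ are linearly disjoint over $L$, so that $G_{P_1 \cdots P_t} = \prod_j G_{P_j}$. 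I then take $\Lambda$ to be the cofinite (hence infinite) set of primes of $A$ lying outside $\Sigma$.

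With this setup, (A'2) is essentially automatic: under the identification $G_I = \prod_j G_{P_j}$ coming from (ii), the set $S_I$ is by definition the direct product of the $S(G_{P_j}, {}_\varphi \tilde L[P_j])$, so the asserted multiplicativity of the densities drops out. For (A'1), I use the lower bound of Lemma 5.1: for each $P \in \Lambda$, identifying $(G_P, {}_\varphi \tilde L[P])$ with $(\GL(r, \mathbb{F}_{N(P)}), \mathbb{F}_{N(P)}^r)$ via (i) gives
\[ \frac{\card S(G_P, {}_\varphi \tilde L[P])}{\card G_P} \ge \frac{c'_1}{N(P)}. \]
Since the Dedekind-type zeta series of $A$ has a simple pole at $s = 1$, the sum $\sum_P N(P)^{-1}$ over all non-zero primes of $A$ diverges; removing the finitely many $P \in \Sigma$ does not destroy divergence, and (A'1) follows.

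For (B'), the upper bound in Lemma 5.1 yields $\card S(G_P, {}_\varphi \tilde L[P])/\card G_P \le c'_2 / N(P)$ for every $P \in \Lambda$. The remaining primes form the finite set $\Sigma$, and for each such $P$ the ratio is trivially $\le 1 \le N(P)/N(P)$, so enlarging $c'_2$ to $c := \max\bigl(c'_2, \max_{P \in \Sigma} N(P)\bigr)$ gives a bound $c/N(P)$ valid uniformly in $P$, as required.

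The plan has no serious combinatorial obstacle: the heavy lifting is done entirely by Pink-R\"utsche and Lemma 5.1, with the remainder being a translation between the Galois theoretic and linear algebraic sides. The one point that must be handled with care is the invocation of Pink-R\"utsche, where the hypothesis $\End_{\tilde L}\varphi = A$ built into $\Drin_A^0 L$ is essential (it rules out extra endomorphisms that would otherwise force the image of Galois into a proper closed subgroup of $\GL_r(\hat A)$), and generic characteristic together with finite generation of $L/K$ is needed for the theorem to apply.
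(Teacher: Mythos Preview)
Your proof is correct and follows essentially the same route as the paper: invoke Pink--R\"utsche to obtain surjectivity of $G_P \to \GL(r,A/P)$ for all $P$ outside a finite set, feed this into Lemma~5.1 to get the two-sided estimate $c_1/N(P) \le \card S(G_P,{}_\varphi\tilde L[P])/\card G_P \le c_2/N(P)$, and deduce (A$'$1) from the divergence of $\sum_P N(P)^{-1}$ and (A$'$2) from linear disjointness of the $L_{P_j}$. The only cosmetic differences are that the paper derives your item~(ii) explicitly from the surjectivity of $\rho_I$ for $I$ coprime to $C$ (via $\card G_{IJ} = \card\GL(r,A/IJ) = \card\GL(r,A/I)\cdot\card\GL(r,A/J)$) rather than listing it as a separate output of Pink--R\"utsche, and conversely you handle the finitely many bad primes in~(B$'$) explicitly whereas the paper glosses over them.
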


\begin{proof}
Consider an embedding
\[ \rho_I: G_I = \Gal(L({}_\varphi\tilde{L}[I])/L) \hookrightarrow \GL_{A/I}({}_\varphi\tilde{L}[I]) \cong \GL(r, A/I) \]
for each non-zero ideal $I$ of $A$.
Here $r$ is the rank of $\varphi$ and the isomorphism $\GL_{A/I}({}_\varphi\tilde{L}[I]) \cong \GL(r, A/I)$ is defined under a fixed basis of $_\varphi\tilde{L}[I]$ over $A/I$.
By the theorem of Pink and R\"{u}tsche \cite[Theorem 0.1]{PR}, there exists an ideal $C$ of $A$ such that $\rho_I$ is surjective for all $I$ prime to $C$. 
Let $\Lambda$ be the set of all non-zero prime ideals prime to $C$.
Then all but finitely many prime ideals belong to $\Lambda$.
For $P \in \Lambda$, we know
\[ \frac{\card S(G_P, {}_\varphi\tilde{L}[P])}{\card G_P} = \frac{\card S(\GL(r, A/P), (A/P)^r)}{\card \GL(r, A/P)}. \]
By the above lemma, there exist $c_1$, $c_2 > 0$ such that
\[ \frac{c_1}{N(P)} \le \frac{\card S(G_P, {}_\varphi\tilde{L}[P])}{\card G_P} \le \frac{c_2}{N(P)} \]
for all $P \in \Lambda$.
This proves (A'1) and (B') of Proposition \ref{rdc2} because $\sum_P 1 / N(P)$, where $P$ ranges over all non-zero prime ideals of $A$, diverges.

To prove (A'2), it is sufficient to show that if $I$ and $J$ are ideals of $A$ such that $I$, $J$, $C$ are pairwise relatively prime, then $L_I$ and $L_J$ are linearly disjoint over $L$.
Indeed, we have
\[ L_{IJ} = L_I L_J, \quad \GL(r, A/IJ) \cong \GL(r, A/I) \times \GL(r, A/J). \]
Therefore
\begin{multline*}
[L_I L_J : L] = [L_{IJ} : L] = \card G_{IJ} = \card \GL(r, A/IJ) \\
= \card \GL(r, A/I) \card \GL(r, A/J) = \card G_I \card G_J = [L_I : L] [L_J : L].
\end{multline*}
This implies the linearly disjointness of $L_I$ and $L_J$, as desired.
\end{proof}

Now we can complete the proof of Parts (a) and (b) in the main theorem.
Let $L$ be a finitely generated extension of $K$.
Proposition \ref{rdc1} says that (A) for all finitely generated extensions $L/K$ implies (a).
From Proposition \ref{rdc2} in order to derive (A) it is sufficient to show (A') for every $\varphi \in \Drin_A^0 L$, but this is a result of the above proposition.
By the same arguments, we have (b) in Theorem \ref{mainthm}. \\

\par Next, we prove Part (c) in the main theorem.
The above discussion for Parts (a) and (b) can also apply to this part.
Therefore what we need to work out is to show (C') of Proposition \ref{rdc2} for every finitely generated extension $L/K$ and for every $\varphi \in \Drin_A^0 L$.

\begin{lemma}
Let $P$ be a non-zero prime ideal of $A$ and $r$ be a positive integer.
Then $S(\GL(r, A_P), A_P^r)$ is a zero set in $\GL(r, A_P)$.
\end{lemma}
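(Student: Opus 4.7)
The plan is to identify $S := S(\GL(r, A_P), A_P^r)$ as the zero locus of a single nonzero polynomial in the matrix entries, and then show that such loci are Haar-negligible in $A_P^{r^2}$, transferring this to $\GL(r, A_P)$ via the reduction maps.

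First, for $g \in \GL(r, A_P)$, I would verify that $g$ fixes a nonzero vector of $A_P^r$ if and only if $\det(g - I) = 0$ in $A_P$. The forward direction is Cramer's rule over the fraction field $F_P$ of $A_P$; for the converse, a nonzero kernel vector of $g - I$ in $F_P^r$ may be cleared of denominators to produce one in $A_P^r$. Hence $S = \{\, g \in \GL(r, A_P) \mid f(g) = 0 \,\}$, where $f(X) := \det(X - I) \in A_P[X_{ij}]_{1 \le i, j \le r}$ is a nonzero polynomial since $f(0) = (-1)^r$ is a unit.

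The core step is the following elementary lemma, proved by induction on $m$: for any nonzero $F \in A_P[Y_1, \ldots, Y_m]$, the zero locus $Z(F) \subseteq A_P^m$ has (additive) Haar measure zero. The base case $m = 1$ holds because $A_P$ is a domain, so $F$ has only finitely many roots. For $m > 1$, write $F = \sum_{k=0}^d c_k(Y_1, \ldots, Y_{m-1}) \, Y_m^k$ with $c_d \ne 0$; by induction $Z(c_d)$ is null in $A_P^{m-1}$, while off $Z(c_d)$ the specialisation of $F$ in $Y_m$ has nonzero leading coefficient, so is a nonzero univariate polynomial with only finitely many roots. Fubini then gives $\mu(Z(F)) = 0$. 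Applied to $f$, this shows $S$ has additive Haar measure zero as a subset of $M_r(A_P) = A_P^{r^2}$.

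To transfer this to the Haar measure on $\GL(r, A_P)$, I would use the reduction maps $\pi_n \colon \GL(r, A_P) \twoheadrightarrow \GL(r, A/P^n)$, whose fibres have uniform Haar measure. Since $S \subseteq \pi_n^{-1}(\bar S_n)$ for $\bar S_n := \{\, \bar g \in \GL(r, A/P^n) : \det(\bar g - I) \equiv 0 \pmod{P^n}\,\}$, one obtains
\[
 \mu(S) \;\le\; \frac{\# \bar S_n}{\# \GL(r, A/P^n)} \;\le\; \frac{\# \{\, \bar X \in M_r(A/P^n) : f(\bar X) \equiv 0 \pmod{P^n} \,\}}{\# \GL(r, A/P^n)}.
\]
The counting version of the inductive lemma (equivalent to the measure statement, as $\mu_{A_P^m}$ is the inverse limit of the uniform measures on $(A/P^n)^m$) forces the numerator to be $o(q^{r^2 n})$, while an elementary calculation (cf.\ Lemma 5.1) gives $\# \GL(r, A/P^n) \gg q^{r^2 n}$. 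Letting $n \to \infty$ yields $\mu(S) = 0$.

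The argument is essentially formal and I anticipate no serious obstacle; the only point requiring real care is the inductive lemma, whose base case uses the domain property of $A_P$, and the bookkeeping needed to pass from the additive measure on $M_r(A_P)$ to the Haar measure on $\GL(r, A_P)$ via the reduction maps.
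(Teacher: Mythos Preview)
Your argument is correct and takes a genuinely different route from the paper's. You recognise $S$ as the vanishing locus of the polynomial $\det(X-I)$ and prove, by an induction on the number of variables together with Fubini, that such loci are null for the additive Haar measure on $A_P^{r^2}$; the transfer to $\GL(r,A_P)$ then goes through, and could in fact be shortened by observing that $\GL(r,A_P)$ is a clopen subset of $M_r(A_P)$ on which $|\det|\equiv 1$, so its Haar measure is simply the normalised restriction of the additive one. The paper instead argues purely group-theoretically: it picks $u\in A_P^\times$ of infinite order, notes that $u^iS$ is the set of matrices with eigenvalue $u^i$, and runs a descending induction on $t$ showing that any $t$-fold intersection $u^{i_1}S\cap\cdots\cap u^{i_t}S$ is null---the base case $t\ge r+1$ being empty since a matrix has at most $r$ eigenvalues, and the inductive step using translation invariance to bound $\mu(D)\le 1/N$ for all $N$. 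Your approach is the natural ``algebraic set has measure zero'' argument and generalises immediately to any polynomial condition; the paper's is shorter, avoids Fubini and any counting, and uses only the eigenvalue interpretation plus translation invariance of Haar measure.
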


\begin{proof} 
Put $S = S(\GL(r, A_P), A_P^r)$, $G = \GL(r, A_P)$, respectively.
$S$ consists of all matrices in $G$ having $1$ as an eigenvalue.
In particular, $S$ is closed in $G$.
For any $u \in A_P^\times$, $u S$ consists of all matrices in $G$ having $u$ as an eigenvalue.
Choose $u \in A_P^\times$ with infinite order.
We claim that if $t$ is a positive integer and $i_1, i_2, \dots, i_t$ are distinct non-negative integers, then $u^{i_1}S \cap u^{i_2}S \cap \dots \cap u^{i_t}S$ is a zero set in $G$.
The lemma is the special case where $t=1$ and $i_1=0$.

Indeed, if $t \ge r+1$ and $i_1, i_2, \dots, i_t$ are distinct non-negative integers, then $u^{i_1}S \cap u^{i_2}S \cap \dots \cap u^{i_t}S$ consists of matrices which have $u^{i_1}, u^{i_2}, \dots, u^{i_t}$ as eigenvalues.
However, every matrix in $G$ has at most $r$ eigenvalues, which implies that $u^{i_1}S \cap u^{i_2}S \cap \dots \cap u^{i_t}S$ is empty.
In particular, it is a zero set.

Now assume that the claim holds for all integers greater than $t$.
We show the claim for $t$.
Let $i_1, i_2, \dots, i_t$ be distinct non-negative integers.
Put $D = u^{i_1}S \cap u^{i_2}S \cap \dots \cap u^{i_t}S$.
Consider the translation $u^m D$ for each non-negative integer $m$.
If $m \neq m'$, then the intersection $u^m D \cap u^{m'} D$ is the intersection of at least $t+1$ sets of $u^i S$'s.
By the assumption of induction, $\mu_G(u^m D \cap u^{m'} D) = 0$.
Therefore for each positive integer $N$ we obtain
\[ \mu_G(D) = \frac{1}{N} \sum_{m=0}^{N-1} \mu_G(u^m D) = \frac{1}{N} \mu_G \left( \bigcup_{m=0}^{N-1} u^m D \right) \le \frac{1}{N}. \]
Taking $N \to \infty$, we have $\mu_G(D) = 0$.
\end{proof}

\begin{proposition}
Assume that $L$ is finitely generated over $K$. Then \textup{(C')} of Proposition \ref{rdc2} holds for every $\varphi \in \Drin_A^0 L$.
\end{proposition}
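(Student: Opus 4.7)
The plan is to transport the problem into $\GL(r, A_P)$ via the $P$-adic Galois representation on the Tate module and then invoke the preceding lemma. Fix a non-zero prime ideal $P$ of $A$, let $r$ denote the rank of $\varphi$, and choose a basis of the free $A_P$-module $T_P(\varphi)$ of rank $r$. This produces a continuous injective homomorphism
\[ \rho_{P^\infty}: G_{P^\infty} \hookrightarrow \GL(r, A_P), \]
which is a homeomorphism onto its image because both sides are profinite and $G_{P^\infty}$ is by definition the Galois group of the field cut out by the $P$-power torsion.

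Next I would invoke the theorem of Pink and R\"utsche \cite[Theorem 0.1]{PR}, already used earlier in this section, to conclude that the image $H = \rho_{P^\infty}(G_{P^\infty})$ is open, hence of finite index, in $\GL(r, A_P)$. By uniqueness of the normalized Haar measure, the identification $G_{P^\infty} \xrightarrow{\sim} H$ transports $\mu_{G_{P^\infty}}$ to the restriction of $\mu_{\GL(r, A_P)}$ to $H$, rescaled by the positive factor $\mu_{\GL(r, A_P)}(H)^{-1}$.

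Under this identification an element $\sigma \in G_{P^\infty}$ fixes a non-zero vector of $T_P(\varphi)$ if and only if $\rho_{P^\infty}(\sigma)$ has $1$ as an eigenvalue, so that
\[ \rho_{P^\infty}\bigl(S(G_{P^\infty}, T_P(\varphi))\bigr) = H \cap S(\GL(r, A_P), A_P^r). \]
By the preceding lemma the right-hand side is contained in the $\mu_{\GL(r, A_P)}$-null set $S(\GL(r, A_P), A_P^r)$, so after rescaling by the finite factor $\mu_{\GL(r, A_P)}(H)^{-1}$ it is still null for $\mu_H$. Pulling back by $\rho_{P^\infty}^{-1}$ then yields $\mu_{G_{P^\infty}}(S(G_{P^\infty}, T_P(\varphi))) = 0$, which is exactly (C').

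The main point to watch is the openness of $H$ in $\GL(r, A_P)$; without it the closed subgroup $H$ could a priori be entirely contained in the set of matrices having $1$ as an eigenvalue and thus fail to be a zero set within itself. Openness is exactly what Pink--R\"utsche provides, and it holds for every $P$: the finitely many primes dividing the conductor merely force $H$ to be a proper open subgroup of $\GL(r, A_P)$ rather than all of it.
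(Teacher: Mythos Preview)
Your proof is correct and follows essentially the same route as the paper: embed $G_{P^\infty}$ into $\GL(r,A_P)$ via the Tate module, use that the image has finite index to compare Haar measures, and apply the preceding lemma. The only difference is the reference for openness of the image: the paper cites Pink~\cite[Theorem 0.1]{Pi} (the single-prime result) rather than Pink--R\"utsche~\cite{PR}, but adelic openness also yields openness at each $P$ by projection, so your citation works just as well.
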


\begin{proof}
Let $\varphi \in \Drin_A^0 L$ and $P$ be a non-zero prime ideal of $A$.
Consider an embedding
\[ \rho_{P^\infty} : G_{P^\infty} = \Gal(L({}_\varphi\tilde{L}[P^\infty])/L) \hookrightarrow \GL_{A_P}(T_P(\varphi)) \cong \GL(r, A_P). \]
Here $r$ is the rank of $\varphi$ and the isomorphism $\GL_{A_P}(T_P(\varphi)) \cong \GL(r, A_P)$ is defined under a fixed basis of $T_P(\varphi)$ over $A_P$.
Denote by $G(\varphi, P)$ the image of $\rho_{P^\infty}$.
Then $\rho_{P^\infty}$ maps $S(G_{P^\infty}, T_P(\varphi))$ into $S(G(\varphi, P), A_P^r) \subseteq S(\GL(r, A_P), A_P^r)$.
The result of Pink \cite[Theorem 0.1]{Pi} asserts that $G(\varphi, P)$ has finite index in $\GL(r, A_P)$.
Hence it suffices to show that the measure of $S(\GL(r, A_P), A_P^r)$ is zero in $\GL(r, A_P)$, but this is already proved in the above lemma.
\end{proof}

\ \\
\small{
Department of Mathematics, Tokyo Institute of Technology

2-12-1, O-okayama, Meguro-ku, Tokyo 152-8551, Japan

\textit{E-mail address} : \texttt{asayama.t.aa@m.titech.ac.jp}
}
\end{document}